\documentclass[10pt,a4paper,draft]{article}

\usepackage{amsmath,amsthm,amsopn,amsfonts,amssymb,dsfont,color}
\usepackage[dvips]{graphicx}
\usepackage{psfrag}
\usepackage{enumerate}

\usepackage{a4,a4wide}

\def\ep{{\varepsilon}}
\def\R{\mathbb R}

\newtheorem{theo}{\textbf{Theorem}}[section]

\newtheorem{lem}[theo]{\textbf{Lemma}}
\newtheorem{prop}[theo]{\textbf{Proposition}}

\newtheorem{defi}[theo]{\textbf{Definition}}

\newtheorem{assumption}[theo]{\textbf{Assumption}}
\newtheorem{rem}[theo]{\textbf{Remark}}

\title{Varying the direction of propagation in
 reaction-diffusion equations in periodic
media}
\date{}

\begin{document}

\maketitle

\begin{center}
{\large\bf Matthieu Alfaro \footnote{ I3M, Universit\'e de
Montpellier 2, CC051, Place Eug\`ene Bataillon, 34095 Montpellier
Cedex 5, France. E-mail: matthieu.alfaro@univ-montp2.fr}
 and  Thomas Giletti
\footnote{IECL, Universit\'{e} de Lorraine, B.P. 70239, 54506
Vandoeuvre-l\`{e}s-Nancy Cedex, France. E-mail:
thomas.giletti@univ-lorraine.fr}.}\\
[2ex]

\end{center}

\vspace{15pt}

\tableofcontents

\vspace{10pt}

\begin{abstract} We consider a multidimensional reaction-diffusion equation of either ignition or monostable type, involving periodic heterogeneity, and analyze the dependence of the
propagation phenomena on the direction. We prove that the
(minimal) speed of the underlying pulsating fronts depends
continuously on the direction of propagation, and so does its
associated profile provided it is unique up to time shifts. We
also prove that the spreading properties \cite{Wein02} are
actually uniform with respect to the
direction.\\

\noindent{\underline{Key Words:} periodic
media, monostable nonlinearity, ignition nonlinearity, pulsating traveling front, spreading properties.}\\

\noindent{\underline{AMS Subject Classifications:} 35K57, 35B10.}
\end{abstract}

\section{Introduction}\label{s:intro}

In this work, we focus on the  heterogeneous reaction-diffusion
equation
\begin{equation}\label{monostable}
\partial _t u=  \mbox{div} (A (x) \nabla u)  + q (x) \cdot \nabla u + f(
x,u), \quad t\in \R, x\in \R ^N.
\end{equation}
Here $A = (A_{i,j})_{1 \leq i,j \leq N}$ is a matrix field, and $q = (q_1 , ... ,q_N)$
is a vector
 field, to be precised later. The nonlinearity $f$ is of either the
monostable type (not necessarily with the KPP assumption) or
ignition type, which we will define below. We would like to
understand, in the periodic framework, how the propagation
phenomena depend on the direction.

On the one hand, we prove that the minimal (and, in the ignition
case, unique) speed of the well known pulsating fronts depends
continuously on the direction of propagation. On the other hand,
we prove that the spreading properties are in some sense uniform
with respect to the direction, thus improving the seminal result
of Weinberger \cite{Wein02}. While in the ignition case, these
properties will mostly follow from the well known uniqueness of
the pulsating traveling wave, such uniqueness does not hold true
in the monostable case where the set of admissible speeds is
infinite. Our argument will be inspired
by~\cite{berestycki-nirenberg}, \cite{Ber-Ham-02}, and will rely
on an approximation of the monostable nonlinearity by some
well-chosen ignition nonlinearities.

\subsection{Main assumptions}

Let $L_1$,...,$L_N$ be given positive constants. A function $h:\R
^ N \to \R$ is said to be {\it periodic} if
$$
h(x_1,...,x_k+L_k,...,x_N)=h(x_1,...,x_N),
$$
for all $1\leq k \leq N$, all $(x_1,...,x_N)\in \R^N$. In such
case, $\mathcal{C} = (0,L_1)\times\cdots \times (0,L_N)$ is called
the cell of periodicity. Through this work, we put ourselves in
the spatially periodic framework and assume that
\begin{equation}\label{periodicity}
\begin{array}{l}
\mbox{ for all } 0 \leq i, j \leq N, \, \mbox{ the functions } A_{i,j}: \R^N \to \R \, ,  \ q_i : \R^N \to \R \quad \mbox{ are periodic},\vspace{3pt}\\
\text{ for all } u\in \R_+,\, \mbox{ the function } f(\cdot,u):
\R^N \to \R \text{ is periodic}.
\end{array}
\end{equation}
Moreover, we assume that $A = (A_{i,j})_{1 \leq i,j \leq N}$ is a $C^3$ matrix field which satisfies
\begin{equation}\label{ellipticity}
\begin{array}{l}
A(x) \mbox{ is a symmetric matrix for any } x \in \R^N,\vspace{3pt}\\
\exists 0 < a_1 \leq a_2 < \infty, \quad \forall (x, \xi) \in \R^N
\times \R^N, \quad a_1 |\xi|^2 \leq \sum_{i,j} A_{i,j} (x) \xi_i
\xi_j \leq a_2 |\xi |^2.
\end{array}
\end{equation}
Concerning the advection term, we assume that $q = (q_1 , ... ,q_N)$
is a $C^{1,\delta}$ vector
 field, for some $\delta >0$, which satisfies
 \begin{equation}
\begin{array}{l}
\mbox{div} \, q =0 \mbox{ in } \R^N \quad \mbox{ and } \quad
\forall 0 \leq i \leq N , \ \displaystyle \int_{\mathcal{C}} q_i =
0.
\end{array}
\end{equation}
The  advection term in the equation is mostly motivated by
combustion
 models where the dynamics of the medium also plays an essential role. In such
 a context, the fact that the flow $q$ has zero divergence carries the physical
  meaning that the medium is incompressible.

Furthermore, we will assume that $f$ satisfies either of the
following two assumptions.
\begin{assumption}[Monostable nonlinearity]\label{hyp:monostable} The function $f:\R ^N \times \R_+\to \R$ is of class $C^{1,\alpha}$ in $(x,u)$
and~$ C^2$ in $u$, and nonnegative on $\R^N\times [0,1]$. Concerning the steady states of the periodic
equation~\eqref{monostable}, we assume that
\begin{enumerate}[(i)]
\item the constants 0 and 1 are steady states (that is,
$f(\cdot,0)\equiv f (\cdot,1) \equiv
0$ in $\R^N$); 
\item $\forall u \in (0,1), \ \exists x \in \R^N, \ \  f(x,u) >0$.
\item there exists some $\rho
>0$ such that $f (x,u)$ is nonincreasing with respect to $u$ in
the set $\R^N \times (1-\rho,1]$.
\end{enumerate}
\end{assumption}

Notice that, if $0\leq p(x)\leq 1$ is a periodic stationary state, then $p\equiv 0$ or $p\equiv 1$. Indeed, since $f(x,p)\geq 0$, the strong maximum principle enforces $p$ to be identically equal to its minimum, thus constant and, by $(ii)$, the constant has to be 0 or 1.

\begin{assumption}[Ignition nonlinearity]\label{hyp:ignition} The function $f:\R ^N \times \R_+\to \R$ is locally Lipschitz-continuous on $\R^N \times \R_+$.
Concerning the steady states of the periodic
equation~\eqref{monostable}, we assume that
\begin{enumerate}[(i)]
\item there exists $0<\theta<1 $ such that
$$\forall 0 \leq u \leq \theta, \ \ \forall x \in \R^N, \qquad f(x,u)=0,$$
as well as
$$\forall x \in \R^N , \qquad f(x,1)=0.$$
\item $\forall u \in (\theta,1), \ \exists x \in \R^N, \ \  f(x,u)
>0$.
\item there exists some
$\rho
>0$ such that $f (x,u)$ is nonincreasing with respect to $u$ in
the set $\R^N \times (1-\rho,1]$.
\end{enumerate}
\end{assumption}

Notice that, similarly as above, $(ii)$ implies that if $\theta\leq p(x)\leq 1$ is a periodic stationary state then $p\equiv \theta$ or $p\equiv 1$.

\subsection{Comments and related results}

Under Assumption~\ref{hyp:monostable},
Assumption~\ref{hyp:ignition}, equation~\eqref{monostable} is
referred to as the monostable equation, the ignition equation
respectively. Both sets of assumptions arise in various fields of
physics and the life sciences, and especially in combustion and
population dynamics models where propagation phenomena are
involved. Indeed, a particular feature of these equations is the
formation of traveling fronts, that is particular  solutions
describing the transition at a  constant speed from one stationary
solution to another. Such solutions have proved in numerous
situations their utility in describing the spatio-temporal
dynamics of a population, or the propagation of a flame modelled
by a reaction-diffusion equation.

Equation \eqref{monostable} is a heterogeneous version of the well
known reaction-diffusion equation
\begin{equation}\label{homogene}
\partial _t u=\Delta u +f(u),
\end{equation}
where typically $f$ belongs to one of the three following classes:
monostable, ignition and bistable.  Homogeneous reaction-diffusion
equations have been extensively studied in the literature (see
\cite{Kan1}, \cite{Aro-Wei1, Aro-Wei2}, \cite{Fif-Mac},
\cite{Ber-Nic-Sch}, \cite{Volpert-Volpert-Volpert} among others)
and are  known to support the existence of monotone traveling
fronts. In particular, for monostable nonlinearities, there exists
a critical speed $c^*$ such that all speeds $c\geq c^*$ are
admissible, while in the bistable and ignition cases, the
admissible speed $c= c^*$ is unique. Moreover, in both cases, the
speed $c^*$ corresponds to the so-called  spreading speed of
propagation of compactly supported initial data.

Among monostable nonlinearities, one can distinguish the ones
satisfying the Fisher-KPP assumption, namely $u\mapsto
\frac{f(u)}u$ is maximal at 0 (meaning that the growth per capita
is maximal at small densities), the most famous example being
introduced by Fisher \cite{Fish} and Kolmogorov, Petrovsky and
Piskunov \cite{Kol-Pet-Pis} to model the spreading of advantageous
genetic features in a population:
\begin{equation*}
 \partial_t u=\Delta u+u(1-u).
\end{equation*}
Let us notice that our work stands in the larger class of
monostable nonlinearities.

Nevertheless, much attention was more recently devoted to the
introduction of some heterogeneity, taking various forms such as
advection, spatially dependent diffusion or reaction term. Taking
such a matter into account is essential as far as models are
concerned, the environment being rarely homogeneous and may depend
in a non trivial way on the position in space (patches, periodic
media, or more general heterogeneity...). We refer to the seminal
book of Shigesada and Kawasaki \cite{Shi-Kaw}, and the
enlightening introduction in \cite{Ber-Ham-Roq1} where the reader
can find very precise and various references. As far as combustion models are concerned, one can consult  \cite{Ber-Nic-Sch}, \cite{Xin} and the references therein.

Traveling front solutions in heterogeneous versions of
\eqref{homogene} with periodicity in space, in time, or more
general media are studied in \cite{Wein02},
\cite{berestycki-nirenberg}, \cite{HZ}, \cite{Xin3},
\cite{Ber-Ham-02}, \cite{Ber-Ham-Roq2}, \cite{Nad-09},
\cite{Nolen-Ryzhik} among others. For very general
reaction-diffusion equations, we refer to
\cite{berestycki-hamel-cpam2} for a definition of generalized
transition waves and their properties.

In this work, we restrict ourselves to the spatially periodic
case, which provides insightful information on the role and
influence of the heterogeneity on the propagation, as well as a
slightly more common mathematical framework. In this periodic
setting, let us mention the following keystone results for
ignition and monostable nonlinearities. Weinberger \cite{Wein02}
exhibited a direction dependent spreading speed for planar-shaped
initial data and proved, in the monostable case, that this
spreading speed is also the minimal speed of pulsating traveling
waves moving in the same direction. His approach relies on a
discrete formalism, in contrast with the construction of both
monostable and ignition pulsating traveling waves by Berestycki
and Hamel \cite{Ber-Ham-02}, via more flexible PDE technics. In
this PDE framework, note also the work of Berestycki, Hamel and
Roques \cite{Ber-Ham-Roq2} where KPP pulsating fronts are
constructed without assuming the nonnegativity of the
nonlinearity. Our main goal is to study how these results behave
when we vary the direction of propagation.

\medskip

Let us give another motivation for our analysis of the dependence
of the propagation on the direction. Our primary interest was
actually to study the {\it sharp interface limit} $\ep \to 0$ of
\begin{equation}\label{papier2}
 \partial _t u^\ep= \ep \Delta u^\ep+\displaystyle \frac 1 \ep f\left(\frac x
 \ep,u^\ep\right),
\end{equation}
arising  from the hyperbolic space-time rescaling
$u^\ep(t,x):=u\left(\frac t \ep, \frac x \ep\right)$ of
\eqref{monostable}, with $A\equiv Id$, $q\equiv 0$. The parameter
$\ep>0$ measures the thickness of the diffuse interfacial layer.
As this thickness tends to zero, \eqref{papier2} converges --- in
some sense--- to a limit interface, whose motion is governed by
the minimal speed (in each direction) of the underlying pulsating
fronts. This dependence of the speed on the (moving) normal
direction is in contrast with the homogeneous case and makes the
analysis quite involved. In particular, it turns out that we need
to improve (by studying the uniformity with respect to the
direction) the known spreading properties \cite{Wein02},
\cite{Ber-Ham-02}, for both ignition and monostable nonlinearities
in periodic media. We refer to \cite{A-Gil} for this singular
limit analysis, using some of the results of the present work.

\section{Main results}\label{s:results}

Before stating our main results in subsection~\ref{ss:varying},
let us recall the classical results on both pulsating fronts and
spreading properties in subsection~\ref{ss:known}.

\subsection{Pulsating fronts and spreading properties: known results}\label{ss:known}

The definition of the so-called pulsating traveling wave was
introduced by Xin \cite{Xin} in the framework of flame
propagation. It is the natural extension, in the periodic
framework, of classical traveling waves. Due to the interest of
taking into account the role of the heterogeneity of the medium on
the propagation of solutions, a lot of attention was later drawn
on this subject. As far as monostable and ignition pulsating
fronts are concerned, we refer to the seminal works of Weinberger
\cite{Wein02}, Berestycki and Hamel \cite{Ber-Ham-02}. Let us also
mention \cite{Ber-Ham-Roq2}, \cite{Ham}, \cite{Ham-Roq},
\cite{Nad-09} for related results.

\medskip

 For the sake of completeness,
let us first recall the definition of a pulsating traveling wave
for the equation~\eqref{monostable}, as stated in
\cite{Ber-Ham-02}.

\begin{defi}[Pulsating traveling wave]\label{def:puls}
A pulsating traveling wave solution, with speed $c >0$ in the
direction $n \in \mathbb{S}^{N-1}$, is an entire solution $u(t,x)$
--- $t\in\R$, $x\in \R ^N$--- of \eqref{monostable} satisfying
$$
\forall k \in \prod_{i=1}^N L_i \mathbb{Z} , \qquad u(t
,x)=u\left(t + \frac{k\cdot n}{c} ,x+k\right),
$$
for any $t \in \R$ and $x\in \R^N$, along with the asymptotics
$$
u(-\infty,\cdot)=0  < u (\cdot , \cdot) < u(+\infty,\cdot) = 1,
$$
where the convergences in $\pm \infty$ are understood to hold
locally uniformly in the space variable.
\end{defi}

One can easily check that, for any $c>0$ and $n \in
\mathbb{S}^{N-1}$, $u(t,x)$ is a pulsating traveling wave with
speed $c$ in the direction $n$ if and only if it can be written in
the form
$$
u(t,x) = U (x\cdot n -ct,x),
$$
where~$U(z,x)$ --- $z\in \R$, $x\in \R^N$--- satisfies
$$
\text{ for all } z\in \R,\, U(z,\cdot): \R^N \to \R \text{ is
periodic},
$$
\begin{equation*}
 U (-\infty, \cdot) = 1  < U (\cdot , \cdot) <   U (+\infty,\cdot)
 =0 \quad \text{ uniformly w.r.t. the space variable},
\end{equation*}
along with the following equation
\begin{equation}\label{eq-tw}
\begin{array}{l}
\mbox{div}_x (A \nabla_x U) + (n \cdot An)\, \partial_{zz} U + \mbox{div}_x ( An \,  \partial_z U) + \partial_z (n\cdot A \nabla_x U) \vspace{3pt}\\
\qquad \qquad + \, q \cdot \nabla_x U +  (q \cdot n) \, \partial_z
U   + c\partial _z U+f(x,U)=0 , \quad \text{ on } \R\times \R ^N.
\end{array}
\end{equation}

We can now recall the results of \cite{Ber-Ham-02} (see also
\cite{Wein02} for the monostable case), on the existence of
pulsating traveling waves for the spatially periodic monostable
and ignition equations. Precisely, the following holds.

\begin{theo}[Monostable and ignition pulsating fronts, \cite{Ber-Ham-02},\cite{Wein02}]

\begin{itemize}
\item Assume that $f$ is of the spatially periodic monostable
type, i.e. $f$ satisfies \eqref{periodicity} and
Assumption~\ref{hyp:monostable}. Then for any $n \in
\mathbb{S}^{N-1}$, there exists $c^* (n) >0$ such that pulsating
traveling waves with speed $c$ in the direction $n$ exist if and
only if $c \geq c^* (n).$ \item Assume that $f$ is of the
spatially periodic ignition type, i.e. $f$ satisfies
\eqref{periodicity} and Assumption~\ref{hyp:ignition}. Then for
any $n \in \mathbb{S}^{N-1}$, there exists a unique (up to time
shift) pulsating traveling wave, whose speed we denote by $c^* (n)
>0$.\medskip
\end{itemize}
Furthermore, in both cases, any pulsating traveling wave is
increasing in time.
\end{theo}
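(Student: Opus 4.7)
The plan is to reformulate the theorem as an existence question for the degenerate elliptic profile equation \eqref{eq-tw}, posed on the infinite cylinder $\mathbb{R}\times\mathbb{R}^N$ with periodicity in $x$ and the asymptotic conditions $U(-\infty,\cdot)=1$, $U(+\infty,\cdot)=0$. I would treat the speed $c$ as a free parameter and seek solutions via a standard truncate-and-pass-to-the-limit scheme, handling the ignition case first and then recovering the monostable case by approximation, following the strategy of Berestycki--Hamel.

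For the ignition case, I would first cut off the cylinder in the $z$ variable, working on $\Sigma_a:=(-a,a)\times\mathbb{R}^N$ with the forcing boundary values $U=1$ on $\{z=-a\}$ and $U=0$ on $\{z=a\}$. For each $a$ large and each $c$ in a suitable range, the degenerate elliptic problem becomes uniformly elliptic in the $(z,x)$ variables and can be solved by Leray--Schauder degree or by continuation from an auxiliary linear problem, provided one controls sub- and supersolutions. The critical ingredient is the selection of $c=c^*(n)$: I would enforce a normalisation such as $\max_{x\in\mathcal{C}} U(0,x)=\theta$, where $\theta$ is the ignition threshold, and show by a standard parabolic/elliptic maximum principle argument that the value of $c$ compatible with this normalisation is unique. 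Passing to the limit $a\to+\infty$ relies on Schauder estimates inside the cylinder together with monotonicity in $z$ (obtained by the sliding method once the solution is constructed), which prevents the profile from degenerating. Uniqueness up to shift and monotonicity in time both follow from the sliding method applied to the full profile equation, using strict monotonicity of $c\mapsto$ (speed) together with the structure of the ignition nonlinearity near $0$.

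For the monostable case, I would introduce a family of ignition approximations $(f_\tau)_{\tau\downarrow 0}$ with threshold $\tau\in(0,1)$, satisfying $f_\tau\leq f$ and converging to $f$ locally uniformly on $\mathbb{R}^N\times(0,1]$. The previous step yields unique pulsating fronts $(U_\tau,c_\tau)$. A compactness argument based on uniform Lipschitz bounds on $f$ and a Harnack inequality on appropriate level sets gives $c^*(n):=\liminf_\tau c_\tau>0$ and a limit profile solving \eqref{eq-tw} with speed $c^*(n)$. For supercritical speeds $c>c^*(n)$, I would perform the truncated-cylinder construction directly, but now using a sub/supersolution pair built, for the supersolution, from a linear exponential $e^{-\lambda(x\cdot n-ct)}\psi_\lambda(x)$ where $\psi_\lambda$ is the principal eigenfunction of the linearised operator at $u=0$; the dispersion relation then yields the admissibility threshold. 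Non-existence for $c<c^*(n)$ follows from the spreading estimates of Weinberger combined with a front-like comparison argument.

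The hardest step, in my view, is the compactness argument as $\tau\to 0$ in the monostable case: because the set of admissible speeds is a half-line, one must prevent the normalised profiles $U_\tau$ from drifting to the trivial solutions $0$ or $1$, and one must exclude both $c^*(n)=0$ and $c^*(n)=+\infty$. A positive lower bound on $c_\tau$ comes from a linearisation near $u=0$ and the generalised principal eigenvalue; a finite upper bound is produced via a KPP-type supersolution built from the principal eigenfunction, exploiting Assumption~\ref{hyp:monostable}(ii) to ensure some point of strict positivity of $f_u(x,0)$. Once $c^*(n)$ is pinned down in a compact interval, elliptic regularity and the normalisation $\max_{x\in\mathcal{C}}U_\tau(0,x)=1/2$ yield a nontrivial limit, completing the proof. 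Time-monotonicity for every admissible speed is then immediate from the sliding method applied to $u(t+h,x)$ versus $u(t,x)$.
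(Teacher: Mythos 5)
The theorem you are proving is recalled by the paper from~\cite{Ber-Ham-02} and~\cite{Wein02} and is not proved there, so the only meaningful comparison is with the Berestycki--Hamel strategy the paper invokes, which your sketch does follow in broad outline (ignition case by truncation on a finite cylinder plus a normalisation pinning the speed, then monostable case by ignition approximation). However, there are two concrete gaps. First, the truncated profile problem on $(-a,a)\times\R^N$ does \emph{not} become uniformly elliptic in the joint variables: the principal symbol of~\eqref{eq-tw} is $(\xi+\eta n)^{\top}A(\xi+\eta n)$ for $(\eta,\xi)\in\R\times\R^N$, which vanishes along the line $\xi=-\eta n$ regardless of $a$. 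Berestycki and Hamel deal with this by first regularising the operator (vanishing viscosity in $z$), solving the regularised problem, and only then removing the regularisation; this extra layer of a~priori estimates and compactness is essential and is not present in your outline.

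Second, your bounds on the approximating ignition speeds $c_\tau$ are built on the linearisation at $u=0$: a negative principal eigenvalue $\mu_0(n,0)$ for the lower bound, and an exponential supersolution $e^{-\lambda(x\cdot n-ct)}\psi_\lambda(x)$ for the upper bound. Both require $\partial_u f(\cdot,0)\not\equiv 0$ somewhere, but Assumption~\ref{hyp:monostable}$(ii)$ only says $f(x,u)>0$ for some $x$ at each fixed $u\in(0,1)$; it allows $\partial_u f(\cdot,0)\equiv 0$, in which case $\mu_0(n,0)=0$ and the linearised dispersion relation gives no positive lower bound. This is precisely why the paper's own proof of $\inf_n c^*(n)>0$ in the ignition setting (the claim~\eqref{claim-kappa}) proceeds by contradiction via Weinberger's spreading result and the shift-invariance of the wave, not via linearisation, and why the upper bound in~\eqref{gggg} uses only the global Lipschitz constant~\eqref{M} of $f$ rather than the eigenvalue problem. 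Your argument needs to be replaced by bounds of this type to cover the full monostable class; as written, it silently restricts to the KPP-like situation where $0$ is linearly unstable, which is exactly the additional hypothesis the paper later introduces in Assumption~\ref{ass_add}$(ii)$ and explicitly flags as not following from Assumption~\ref{hyp:monostable}.
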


The introduction of these pulsating traveling waves was motivated
by their expected role in describing the large time behavior of
solutions of \eqref{monostable} for a large class of initial data.
In this context, let us state the result of \cite{Wein02} for
planar-shaped initial data.

\begin{theo}[Spreading properties,
\cite{Wein02}]\label{Wein02_cauchy} Assume that $f$ is of the
spatially periodic monostable or ignition type, i.e. $f$ satisfies
\eqref{periodicity} and either of the two
Assumptions~\ref{hyp:monostable} and~\ref{hyp:ignition}. Let $u_0$
be a nonnegative and bounded initial datum such that $\| u _0
\|_\infty < 1$ and$$ \exists C>0, \qquad x \cdot n \geq C \
\Rightarrow \ u_0 (x) = 0,
$$
$$
\liminf_{x \cdot n \to -\infty} u_0 (x) >0 \ \ (monostable \ case)
, \qquad \liminf_{x \cdot n \to -\infty} u_0 (x) >\theta \ \
(ignition \ case)
$$
for some $n\in \mathbb S ^{N-1}$.

Then the solution $u$ of \eqref{monostable}, with initial datum
$u_0$, spreads with speed $c^* (n)$ in the $n$-direction in the
sense that
\begin{equation}\label{conclusion_wein2}
\forall c <c^*(n), \qquad \displaystyle \lim_{t \to +\infty} \sup_{x \cdot n \leq ct} |
1 - u (t,x) | =0,
\end{equation}
\begin{equation}\label{conclusion_wein1}
\forall c > c^*(n) , \qquad \lim_{t \to +\infty} \sup_{x \cdot n \geq  c t }
u (t,x) =0.
\end{equation}
\end{theo}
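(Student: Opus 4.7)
The plan is to derive the two limits from comparison arguments with the pulsating fronts given by the previous theorem, supplemented by a preliminary local-convergence lemma that feeds the lower bound. Throughout, $U(z,x)$ denotes the profile of a pulsating front, with the paper's convention $U(-\infty,\cdot)=1$ and $U(+\infty,\cdot)=0$.

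For the upper bound \eqref{conclusion_wein1}, fix $c>c^*(n)$ and choose an intermediate speed $c'\in(c^*(n),c)$ with associated profile $U$. Since $U(z,x)\to 1$ uniformly in $x$ as $z\to-\infty$, one can pick a shift $s\in\R$ so that $U(C+s,\cdot)\geq \|u_0\|_\infty$; this yields $U(x\cdot n +s,x)\geq u_0(x)$ everywhere (the inequality being trivial on $\{x\cdot n>C\}$, where $u_0\equiv 0$). The parabolic comparison principle then gives $u(t,x)\leq U(x\cdot n - c't +s,x)$ for all $t\geq 0$, and on $\{x\cdot n\geq ct\}$ the argument of $U$ exceeds $(c-c')t+s\to +\infty$, so $U\to 0$ uniformly in $x$, which is exactly \eqref{conclusion_wein1}.

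For the lower bound \eqref{conclusion_wein2}, I first establish that $u(t,\cdot)\to 1$ as $t\to\infty$ locally uniformly in $\R^N$. The hypothesis on $u_0$ provides $\eta>0$ (resp.\ $\eta>\theta$ in the ignition case) and $R>0$ with $u_0\geq \eta$ on $\{x\cdot n\leq -R\}$; below this one inserts a compactly supported sub-solution of the evolution. In the ignition case the sub-solution is chosen above $\theta$ on a ball large enough to overcome the ignition barrier, while in the (possibly non-KPP) monostable case one uses the nondegeneracy near $0$ together with a principal eigenfunction of the linearization at $0$ on a large ball. Combined with the classification of bounded periodic steady states noted just after each Assumption, monotone iteration forces local uniform convergence of $u$ to $1$. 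Next, fixing $c<c^*(n)$ and a pulsating front $\widetilde U$ with minimal speed $c^*(n)$, this local convergence provides some $t_0$ and some $s\in\R$ for which $u(t_0,x)\geq \widetilde U(x\cdot n +s,x)$ on the whole of $\R^N$ (the inequality being automatic where $\widetilde U$ is already close to $0$). Comparison then gives $u(t+t_0,x)\geq \widetilde U(x\cdot n - c^*(n)t +s,x)$, and on $\{x\cdot n\leq ct\}$ with $c<c^*(n)$ the argument tends to $-\infty$, so $\widetilde U\to 1$ uniformly in $x$, yielding \eqref{conclusion_wein2}.

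The delicate step is the sub-solution placement in the lower bound, especially for the monostable equation: pulsating fronts are only exponentially small at $+\infty$ and are not unique, so producing a shift $s$ valid globally on $\R^N$ requires quantitative lower bounds on $u(t_0,\cdot)$ not just on a compact set but in an unbounded half-space. This is the classical source of difficulty in Weinberger's recursion argument; in the PDE framework a natural workaround, advertised by the authors in the introduction, is to approximate the monostable nonlinearity by ignition ones for which uniqueness and robust sub-solutions are available, and then pass to the limit as the ignition threshold goes to $0^+$.
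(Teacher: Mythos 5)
The paper does not give its own proof of Theorem~\ref{Wein02_cauchy}: the monostable part is attributed to Weinberger, and the ignition part is only sketched in Remark~\ref{rem_bug}, with the full PDE arguments carried out (in the stronger uniform setting) in Sections~\ref{s:lower-spreading}--\ref{s:spread_ignition}.

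Your upper-bound argument is correct and standard. The lower bound, however, has a genuine gap at the step where you assert that local uniform convergence of $u$ to $1$ produces a shift $s$ with $u(t_0,x)\geq \widetilde U(x\cdot n + s,x)$ on all of $\R^N$, the inequality being automatic where $\widetilde U$ is close to $0$. That claim is false: a pulsating front $\widetilde U$ is strictly positive and decays only exponentially as $x\cdot n\to +\infty$, while $u(t_0,\cdot)$ inherits a faster-than-exponential (Gaussian-type) tail from the compactly supported right end of $u_0$; so at large $x\cdot n$ the front dominates $u(t_0,\cdot)$ and no finite shift can fix this. The device the paper highlights in Remark~\ref{rem_bug} and executes in Sections~\ref{s:lower-spreading} and~\ref{s:spread_ignition} is a sub-solution that is genuinely nonpositive beyond a finite abscissa: $U^*(x\cdot n-(c^*(n)-\alpha)t,x)-\delta$ in the ignition case, and the ignition-approximation front $U^*_\varepsilon$ (which actually reaches $-\varepsilon$) in the monostable case. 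You mention the ignition approximation only as a closing aside, but your proof as written still attempts to place the raw monostable front $\widetilde U$ globally, which cannot work. A further step you pass over silently is the upgrade from local uniform convergence on balls to a uniform lower bound on a full half-space $\{x\cdot n\leq -K\}$; the paper obtains this by tiling the half-space with translates of a compactly supported sub-solution, exploiting spatial periodicity (see the proof of claim~\eqref{claim:proof_uniformspread1}).
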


\begin{rem}[Link between spreading speed and wave speed]\label{rem_bug} 
In \cite{Wein02}, Weinberger was actually concerned with a more
general discrete formalism
 where pulsating waves are not always known to exist. Therefore, the fact that the
 spreading speed and the minimal wave speed are one and the same was only explicitly
  stated in the monostable case.
  
  However, under the ignition Assumption~\ref{hyp:ignition}
  and benefiting from the results in~\cite{Ber-Ham-02}, it is clear by a simple comparison
  argument that the solution associated with any such initial datum spreads at most with the wave speed $c^* (n)$, namely \eqref{conclusion_wein1} holds true. Furthermore, one may check, using for instance $U^* (x \cdot n -
(c^*(n)-\alpha)t,x) - \delta$ as a subsolution of
\eqref{monostable}, where $U^*$ is the pulsating wave with speed
$c^* (n)$ and $\alpha >0$, $\delta >0$ are small enough, that
\eqref{conclusion_wein2} also holds true, at least for some large
enough initial data. Thus, the spreading speed exhibited by
Weinberger must be $c^* (n)$, as one would expect.

We will use a very similar argument in
Section~\ref{s:spread_ignition}, which is why we omit the details.
Moreover, it is a simplification of a
 classical argument, which originates from \cite{Fif-Mac} in the homogeneous framework, and usually aims at proving the stronger
 property that the profile of such a solution $u(t,x)$ of the Cauchy problem converges to that of the
 ignition pulsating wave. We refer for instance to the work of Zlato{\v{s}}~\cite{Zlatos}, which
 dealt with a fairly general multidimensional heterogeneous (not necessarily periodic) framework,
 and covers the above result under the additional assumption that $f(x,u)$ is bounded from below by a standard homogeneous ignition nonlinearity.
\end{rem}

Various features of pulsating fronts and many generalizations of
spreading properties have been studied recently. Nevertheless, as
far as we know, nothing is known on the dependence of these
results on the direction of propagation. Our results stand in this
new framework and are stated in the next subsection.

\subsection{Pulsating fronts and spreading properties: varying the direction}\label{ss:varying}

As recalled above, the periodic ignition equation admits a unique
pulsating traveling wave in any direction $n \in
\mathbb{S}^{N-1}$, while the periodic monostable
equation~\eqref{monostable} admits pulsating traveling waves in
any direction $n \in \mathbb{S}^{N-1}$, for any speed larger than
some critical $c^* (n) >0$. The
 latter is a consequence of the former, as was proved in \cite{Ber-Ham-02} by approximating the monostable equation with an ignition type equation. With some
 modifications of their argument, we will prove the following continuity property.

\begin{theo}[Continuity of minimal
speeds]\label{th:continuity_speeds} Assume that $f$ is of the
spatially periodic monostable or ignition type, i.e. $f$ satisfies
\eqref{periodicity} and either of the two
Assumptions~\ref{hyp:monostable} or~\ref{hyp:ignition}.

Then the mapping $n \in \mathbb{S}^{N-1} \mapsto c^* (n)$ is
continuous.
\end{theo}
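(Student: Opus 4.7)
The strategy is to treat the ignition case first, via a compactness-and-uniqueness argument, and then deduce the monostable case by an approximation of $f$ with ignition nonlinearities, as announced in the introduction. A preliminary step is to establish uniform bounds $0 < \underline c \leq c^*(n) \leq \overline c < \infty$ on $\mathbb{S}^{N-1}$: the upper bound comes from the standard exponential super-solutions $e^{-\lambda(x \cdot n - ct)}\varphi_{\lambda, n}(x)$ built from the $\lambda$-twisted periodic principal eigenfunction (whose eigenvalue depends continuously on $n$ by perturbation of simple eigenvalues), and the lower bound from a compactly supported subsolution of the Cauchy problem as in~\cite{Ber-Ham-02, Wein02}, which can be taken uniform in $n$.

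\textbf{Ignition case.} Given $n_k \to n_\infty$, let $U_k(z,x)$ be the profile of the unique (up to shift) pulsating front in direction $n_k$ at speed $c_k := c^*(n_k)$, normalized by $\max_{x \in \overline{\mathcal C}} U_k(0,x) = (1+\theta)/2$. Thanks to the uniform bounds, elliptic regularity applied to~\eqref{eq-tw} and periodicity in $x$, the family $\{U_k\}$ is precompact in $C^{2,\alpha}_{loc}(\R \times \R^N)$. After extraction, $c_k \to c_\infty$ and $U_k \to U_\infty$ locally uniformly with derivatives; $U_\infty$ solves~\eqref{eq-tw} in direction $n_\infty$ at speed $c_\infty$, inherits the $z$-monotonicity, and admits limits $U_\infty(\pm \infty, \cdot)$ that are periodic steady states. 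Combined with the normalization, the classification of periodic stationary states given after Assumption~\ref{hyp:ignition}, and a uniform-in-$k$ exponential-decay estimate in the pre-reaction zone $\{U_k \leq \theta\}$ (where $f \equiv 0$), this forces $U_\infty(-\infty, \cdot) = 1$ and $U_\infty(+\infty, \cdot) = 0$. Hence $U_\infty$ is a bona fide pulsating front in direction $n_\infty$, and by uniqueness $c_\infty = c^*(n_\infty)$; since every subsequence produces the same limit, $c^*(n_k) \to c^*(n_\infty)$.

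\textbf{Monostable case.} I approximate $f$ by a family $\{f_\tau\}_{\tau > 0}$ of spatially periodic ignition nonlinearities, as in~\cite{Ber-Ham-02}, with $f_\tau \leq f$, $f_\tau \equiv 0$ on $\R^N \times [0, \tau]$, and $f_\tau \to f$ as $\tau \searrow 0$. Let $c^*_\tau(n)$ be the unique ignition wave speed for $f_\tau$; by the ignition case above, $c^*_\tau$ is continuous in $n$, and by~\cite{Ber-Ham-02}, $c^*_\tau(n) \nearrow c^*(n)$ as $\tau \searrow 0$ pointwise in $n$, with $c^*_\tau \leq c^*$. Hence $c^* = \sup_\tau c^*_\tau$ is automatically lower semicontinuous on the sphere. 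For upper semicontinuity I argue by contradiction: suppose $n_k \to n_\infty$ with $c^*(n_k) \to c^*_\infty > c^*(n_\infty)$, and extract $\tau_k \searrow 0$ diagonally so that $c^*_{\tau_k}(n_k) \to c^*_\infty$ too. Fix $c \in (c^*(n_\infty), c^*_\infty)$ and take a monostable pulsating front $V$ in direction $n_\infty$ at speed $c$, which exists since $c > c^*(n_\infty)$. Since $f_{\tau_k} \leq f$, the function $V$ is a super-solution of the $f_{\tau_k}$ equation, whose effective propagation speed in direction $n_k$ is $c/(n_k \cdot n_\infty) \to c$ as $k \to \infty$. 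A sliding comparison \`a la~\cite{berestycki-nirenberg} between $V$ and the ignition front $U_k$ of $f_{\tau_k}$ at speed $c^*_{\tau_k}(n_k)$ then forces $c^*_{\tau_k}(n_k) \leq c/(n_k \cdot n_\infty) = c + o(1)$; letting $k \to \infty$ yields $c^*_\infty \leq c$, contradicting $c < c^*_\infty$.

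\textbf{Main obstacle.} I expect the hardest point to be the sliding step in the monostable case: using the front $V$ in direction $n_\infty$ as a super-solution of the $f_{\tau_k}$ equation that can actually be slid past the ignition front $U_k$ propagating in the \emph{perturbed} direction $n_k$. This requires controlling, uniformly in $k$ and $\tau_k$, the exponential decay at $+\infty$ of both $V$ and $U_k$, so that the two families can be compared; it is precisely where the ``well-chosen'' nature of the ignition approximation and the techniques of~\cite{berestycki-nirenberg, Ber-Ham-02} are essential, and what makes the monostable case substantially more delicate than the ignition one.
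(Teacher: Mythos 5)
Your ignition-case argument is essentially the paper's: uniform bounds on $c^*(n)$, compactness of the normalized profiles, a uniform exponential tail bound in the pre-ignition zone to prevent degeneration, classification of periodic steady states, and uniqueness to conclude. The monostable case also opens as the paper does (ignition approximation $f_\tau \leq f$, and your observation that $c^* = \sup_\tau c^*_\tau$ gives lower semicontinuity for free is a nice clean way to state half of what the paper proves). But the upper-semicontinuity step --- which you yourself single out as the ``main obstacle'' --- is where the proposal has a genuine gap.

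The sliding comparison you propose is between $V$, a pulsating front in direction $n_\infty$, and $U_k$, the ignition front in direction $n_k$, carried out in the physical variables $(t,x)$. This cannot work as stated. The transition layer of $V$ lives near the moving hyperplane $\{x\cdot n_\infty = c(t+h)\}$ while that of $U_k$ lives near $\{x\cdot n_k = c^*_{\tau_k}(n_k)\,t\}$; for $n_k\neq n_\infty$ these hyperplanes are transversal, so for every $t$ and every shift $h$ there is an unbounded wedge of $x$'s where $V(t+h,x)$ is close to $0$ while $U_k(t,x)$ is close to $1-\tau_k/2$. No choice of $h$ orders the two functions on all of $\R\times\R^N$, and the scalar quantity $c/(n_k\cdot n_\infty)$ (which is the trace speed along the ray $\R n_k$, not a front speed) does not fix this: it encodes a one-dimensional projection, whereas the obstruction is the transversality of two $(N-1)$-dimensional level sets. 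So the comparison principle is never invoked on a valid pair, and the inequality $c^*_{\tau_k}(n_k)\le c/(n_k\cdot n_\infty)$ does not follow.

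The paper's resolution is to move the sliding into the front variables $(z,x)$ and keep the direction fixed. One regards $U^*_{\varepsilon_k}(\cdot,\cdot;n_k)$ not as a solution of the $n_k$-equation but as a candidate \emph{supersolution} of the $n$-equation \eqref{eq-tw} (with $c=c^*(n)$); plugging it in produces error terms proportional to $n-n_k$ hitting $\partial_{zz}U^*_{\varepsilon_k}$ and $\nabla_x\partial_z U^*_{\varepsilon_k}$, plus the speed gap $c^*(n)-c^*_{\varepsilon_k}(n_k)$ and the nonlinearity gap $f-f_{\varepsilon_k}\ge 0$. The crucial ingredient that makes these errors absorbable is the lemma (proved via interior parabolic $L^p$-estimates and the Harnack inequality applied to $v=\partial_t u^*_\varepsilon$, using the space-time periodicity of the wave) giving the uniform bounds
$$
|\partial_{zz}U^*_\varepsilon|\le -C\,\partial_z U^*_\varepsilon,\qquad
|\nabla_x\partial_z U^*_\varepsilon|\le -C\,\partial_z U^*_\varepsilon,
$$
with $C$ independent of $\varepsilon$ and $n$. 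With this, the $(n-n_k)$ error terms are dominated by the strictly positive margin $-\tfrac{\delta}{3}\partial_z U^*_{\varepsilon_k}$ coming from the assumed gap $c_\infty\ge c^*(n)+\delta$, so $U^*_{\varepsilon_k}$ is a strict supersolution of the $n$-equation. One then slides $U^*_{\varepsilon_k}$ against the monostable critical front $U^*(\cdot,\cdot;n)$ --- two functions of the \emph{same} variables $(z,x)$, one a strict supersolution and one a solution of the same elliptic-degenerate equation --- and the first-touching argument yields the contradiction. This $(z,x)$-reformulation, together with the derivative-control lemma, is precisely the missing piece; without it the comparison across nearby directions does not close.
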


In the Fisher-KPP case the continuity of the velocity map
$n\mapsto c^*(n)$, even if not explicitly stated, seems to follow
from the characterization of $c^*(n)$ (see \cite{Wein02},
\cite{Ber-Ham-02}). However, for other types of nonlinearities (and in particular, in the more general monostable case), such a property seems to be far from obvious.

For the sake of completeness, let us state the continuity of the
profile of the ignition wave, which will be proved simultaneously.

\begin{theo}[Continuity of ignition waves]\label{th:continuity_igwaves}
If $f$ satisfies \eqref{periodicity} and the ignition
Assumption~\ref{hyp:ignition}, then the mapping
$$n \in \mathbb{S}^{N-1} \mapsto U^* (z,x;n)$$
is continuous with respect to the uniform topology, where
$$u^* (t,x;n) = U^* (x \cdot n  - c^* (n) t ,x;n)$$
is the unique pulsating traveling wave in the $n$ direction,
normalized by $\min _{x\in \R^N}U^* (0,x;n)= \frac{1+ \theta
}{2}$.
\end{theo}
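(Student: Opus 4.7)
The strategy is a compactness argument combined with the uniqueness of ignition pulsating waves. Fix a sequence $n_k \to n$ in $\mathbb{S}^{N-1}$; it suffices to show that $U_k := U^*(\cdot,\cdot;n_k)$ converges uniformly to $U^*(\cdot,\cdot;n)$. By Theorem~\ref{th:continuity_speeds}, $c_k := c^*(n_k) \to c^*(n) =: c_\infty$. Each $U_k$ solves \eqref{eq-tw} with direction $n_k$ and speed $c_k$, is $x$-periodic, strictly decreasing in $z$, and takes values in $(0,1)$. Since the coefficients of \eqref{eq-tw} are uniformly bounded in $k$, Schauder estimates yield uniform $C^{2+\alpha}_{\mathrm{loc}}(\R\times\R^N)$ bounds; up to extraction, $U_k \to U_\infty$ in $C^2_{\mathrm{loc}}$. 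The limit $U_\infty$ solves \eqref{eq-tw} with direction $n$ and speed $c_\infty$, is $x$-periodic, nonincreasing in $z$, and satisfies $\min_{x} U_\infty(0,x)=(1+\theta)/2$ by passage to the limit on the compact set $\{0\}\times\overline{\mathcal{C}}$.

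Monotonicity in $z$ together with elliptic regularity show that the pointwise limits $p_\pm(x):=U_\infty(\pm\infty,x)$ are periodic stationary solutions of the elliptic equation associated with \eqref{monostable}. Since $p_-(x)\geq U_\infty(0,x)\geq(1+\theta)/2>\theta$, the classification recalled right after Assumption~\ref{hyp:ignition} forces $p_-\equiv 1$. The delicate point concerns the forward limit $p_+$: in the ignition setting every constant in $[0,\theta]$ is a periodic stationary state (a simple energy identity using $\mathrm{div}\,q=0$ shows that non-constant ones cannot occur in $[0,\theta]$), so a priori $p_+$ could be a nontrivial constant in $(0,\theta]$.

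The main obstacle is to rule out $p_+\equiv c_0\in(0,\theta]$, and this is handled by a uniform exponential tail bound ahead of the front. In the region where $U_k(z,x)\leq\theta$ the nonlinearity vanishes, so that $U_k$ solves the associated linear elliptic equation. Looking for supersolutions of the form $e^{-\mu z}\phi(x)$ leads to a principal-eigenvalue problem for a periodic elliptic operator parametrized by $(c,n)$, whose principal eigenvalue $\mu(c,n)$ is strictly positive and depends continuously on $(c,n)$. A sliding/comparison argument with such supersolutions yields, for constants $C$, $\mu_0>0$ and $Z_0$ uniform in $k$ (for $k$ large enough, so that $U_k(Z_0,\cdot)$ has indeed entered the linear regime below $\theta$), the bound $U_k(z,x)\leq C e^{-\mu_0 z}$ for $z\geq Z_0$; passing to the limit in $k$ gives $p_+\equiv 0$, the desired contradiction.

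Once $p_+\equiv 0$, the limit $U_\infty$ is a pulsating wave profile in direction $n$ with speed $c^*(n)$, and by the uniqueness result recalled in subsection~\ref{ss:known} together with the normalization $\min_{x} U_\infty(0,x)=(1+\theta)/2$ (which pins down the time shift), $U_\infty=U^*(\cdot,\cdot;n)$. Since every extracted subsequence shares the same limit, the full sequence $U_k$ converges to $U^*(\cdot,\cdot;n)$ locally uniformly. To upgrade to uniform convergence on $\R\times\R^N$, one combines the local uniform convergence on the windows $\{|z|\leq M\}\times\overline{\mathcal{C}}$ with the uniform tail bound at $z\to+\infty$ established above, and the analogous uniform exponential bound for $1-U_k$ as $z\to-\infty$, obtained by a symmetric linearisation near the stable state $u=1$ and using Assumption~\ref{hyp:ignition}$(iii)$. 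The genuinely hard step throughout is the uniformity in $k$ of these tail estimates.
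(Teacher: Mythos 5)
Your proposal follows the same broad strategy as the paper's (joint) proof: extract a locally uniform limit $U_\infty$ by parabolic/Schauder estimates, identify it as a pulsating wave connecting $1$ to $0$ with speed $c^*(n)$, invoke the uniqueness result of Berestycki--Hamel, then upgrade to uniform convergence using the tails. You also correctly single out the exponential tail estimate ahead of the front as the mechanism to rule out $p_+\in(0,\theta]$, and you correctly note that such a constant $p_+$ is the only possible degenerate limit. However, there is a genuine gap precisely at the step you flag as the hardest one.

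The tail estimate $U_k(z,x)\leq Ce^{-\mu_0 z}$ on $\{z\geq Z_0\}$ requires $U_k(Z_0,\cdot)\leq\theta$, so that on $\{z\geq Z_0\}$ the wave solves the \emph{linear} elliptic equation and can be compared to the eigenfunction--exponential supersolution. You assert in the parenthetical that such a $Z_0$, \emph{uniform} in $k$, exists for $k$ large, but this is not automatic: a priori the $\theta$-level crossing of the normalized wave $U_k$ could drift to $+\infty$ along the subsequence. In that case the tail bound would be vacuous on any fixed window, hence useless to conclude $p_+\equiv 0$; and showing the crossing does \emph{not} drift is essentially equivalent to showing $p_+\equiv 0$, which is what you are trying to prove. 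The paper resolves this with a re-shifting trick: it replaces the normalization $\min_x U^*(0,x;n_k)=(1+\theta)/2$ by the auxiliary choice $\max_x U^*(z_k,x;n_k)=\theta$, so the comparison with the exponential supersolution on $\{z\geq z_k\}$ is legitimate for \emph{every} $k$. It then passes to the limit, obtains the exponential decay (hence $p_+\equiv 0$), identifies $p_-\equiv 1$ by a strong-maximum-principle dichotomy, and only at the very end proves that the shifts $z_k$ are bounded, by the elementary monotonicity argument: if $z_k\to+\infty$, then for any fixed $z\leq 0$, eventually $\tilde U_k(z,y_k)\leq \tilde U_k(-z_k,y_k)=(1+\theta)/2$, contradicting $\tilde U_\infty(-\infty,\cdot)=1$. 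This shift-boundedness step is exactly the uniformity in $k$ you need, and cannot be skipped.

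A secondary, minor observation: to upgrade local uniform to uniform convergence, the paper uses a lighter device than your symmetric exponential estimates near $u=1$. Monotonicity of $U^*(\cdot,\cdot;n_k)$ in $z$, together with the (already known) uniform asymptotics of the single limit profile $U^*(\cdot,\cdot;n)$, immediately sandwiches $U^*(z,x;n_k)$ in the two tails for $k$ large; no spectral analysis near $u=1$ is needed, so Assumption~\ref{hyp:ignition}$(iii)$ plays no role at this stage.
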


In Section~\ref{s:ignition}, we deal with the ignition case,
proving both the continuity of the speed (Theorem
\ref{th:continuity_speeds}) and that of the profile (Theorem
\ref{th:continuity_igwaves}). To do so we take advantage of the
uniqueness
 of the pulsating wave in each direction.

 Then, in Section \ref{s:monostable-speed}, we
approach our original monostable equation by some ignition type
problems, and prove that the associated ignition speeds converge
to $c^* (n)$ not only pointwise (as in \cite{Ber-Ham-02}), but
even uniformly with respect to $n \in \mathbb{S}^{N-1}$. The
continuity of the minimal speed (Theorem
\ref{th:continuity_speeds}) then immediately follows.
Unfortunately, the lack of a rigorous uniqueness result of the
monostable pulsating wave with minimal speed (at least up to our
knowledge) prevents us from stating
 continuity of its profile with respect to the speed of propagation. We refer to~\cite{Ham-Roq} for uniqueness
 results in the Fisher-KPP case and discussion on the general monostable framework.

\medskip

We also stated above the well known fact that for any planar-like
initial data in some direction~$n$, the associated solution of
\eqref{monostable} spreads in the $n$ direction with speed $c^*
(n)$. Our main result consists in improving (compare
Theorem~\ref{th:unif_spreading} with Theorem~\ref{Wein02_cauchy})
this property by adding some uniformity with respect to $n \in
\mathbb{S}^{N-1}$, as follows.

\begin{theo}[Uniform spreading]\label{th:unif_spreading}
Assume that $f$ is of the spatially periodic monostable or
ignition type, i.e. $f$ satisfies \eqref{periodicity} and either
Assumption~\ref{hyp:monostable} or Assumption~\ref{hyp:ignition}.
Let a family of nonnegative initial data $(u_{0,n})_{n \in
\mathbb{S}^{N-1}}$ be such that
\begin{equation}\label{hyp1}
\exists C >0, \qquad \forall n \in \mathbb{S}^{N-1}, \qquad x
\cdot n \geq C \ \Rightarrow \ u_{0,n} (x) = 0,
\end{equation}
\begin{equation}\label{hyp2}
\left.
\begin{array}{l}
\exists \mu > \theta  \ \ (ignition \ case) \vspace{3pt}\\
\exists \mu > 0  \ \ (monostable \ case)
\end{array}
\right\} , \qquad \exists K >0, \qquad \inf_{n \in
\mathbb{S}^{N-1}, \; x \cdot n \leq -K } u_{0,n} (x) \geq \mu,
\end{equation}
\begin{equation}\label{hyp0}
\sup_{n \in \mathbb{S}^{N-1}} \sup_{x \in \R^N}
 u_{0,n} (x)
 < 1.
\end{equation}
We denote by $(u_n)_{n \in \mathbb{S}^{N-1}}$ the associated
family of solutions of \eqref{monostable}.

Then, for any $\alpha >0$ and $\delta >0$, there exists $\tau
>0$ such that for all $t \geq \tau$,
\begin{equation}\label{conclusion1}
\sup_{n \in \mathbb{S}^{N-1}} \ \sup_{x \cdot n \leq (c^*(n) -
\alpha)t} |1 -u_n (t,x)| \leq  \delta,
\end{equation}
\begin{equation}\label{conclusion2}
\sup_{n \in \mathbb{S}^{N-1}} \ \sup_{x \cdot n \geq (c^*(n) +
\alpha)t} u_n (t,x) \leq  \delta.
\end{equation}
\end{theo}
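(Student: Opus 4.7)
The plan is to split the theorem into its two inequalities \eqref{conclusion1} and \eqref{conclusion2}, to treat the ignition case first, and to recover the monostable case by an approximation with ignition nonlinearities in the spirit of \cite{Ber-Ham-02}. The classical non-uniform result of Theorem~\ref{Wein02_cauchy}, together with the refinement sketched in Remark~\ref{rem_bug}, is a sandwich argument built on well-chosen sub- and supersolutions coming from a pulsating wave. I would adapt this argument and enforce uniformity in $n$ by combining the continuity theorems \ref{th:continuity_speeds} and \ref{th:continuity_igwaves} with the \emph{uniform-in-$n$} convergence of ignition approximation speeds produced in Section~\ref{s:monostable-speed}.

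Fix $\alpha,\delta>0$. In the ignition case, for \eqref{conclusion2} I would exploit the unique pulsating wave $U^*(\cdot,\cdot;n)$. Thanks to \eqref{hyp1}, \eqref{hyp0} and $U^*(-\infty,\cdot;n)\equiv 1$ uniformly in $n$, one can pick a shift $\sigma$, \emph{independent} of $n$, such that $U^*(x\cdot n+\sigma,x;n)\geq u_{0,n}(x)$ for every $n$ and $x$; the parabolic comparison principle then gives $u_n(t,x)\leq U^*(x\cdot n-c^*(n)t+\sigma,x;n)$. On the set $\{x\cdot n\geq (c^*(n)+\alpha)t\}$ the first argument of $U^*$ exceeds $\alpha t+\sigma$, and Theorem~\ref{th:continuity_igwaves}, which says that the family of profiles is compact in the uniform topology, turns this into a uniform-in-$n$ decay to $0$. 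For \eqref{conclusion1} I would mimic the subsolution construction alluded to in Remark~\ref{rem_bug}: for $\delta_0>0$ small, $\max\{U^*(x\cdot n-(c^*(n)-\alpha/2)t,x;n)-\delta_0,0\}$, suitably time-shifted, is a subsolution of \eqref{monostable}, and \eqref{hyp2} with $\mu>\theta$ combined once again with the continuity of $U^*$ and $c^*$ in $n$ lets one choose the shift uniformly so that it lies below every $u_{0,n}$.

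In the monostable case, \eqref{conclusion2} follows exactly as above: for each $n$ pick a pulsating wave of speed $c^*(n)+\alpha/2$, dominate the initial data via \eqref{hyp1}--\eqref{hyp0}, and use the continuity of $n\mapsto c^*(n)$ together with the compactness of $\mathbb{S}^{N-1}$ to make the construction uniform. For \eqref{conclusion1} I would approximate $f$ from below by a family of ignition nonlinearities $f_\eta$, so that the solutions $u_{\eta,n}$ of the associated Cauchy problems with the same initial data are subsolutions of \eqref{monostable}. By the ignition step already proved, $u_{\eta,n}$ spreads at least with speed $c^*_\eta(n)$, uniformly in $n$. The conclusion then reduces to choosing $\eta>0$ so small that $c^*_\eta(n)\geq c^*(n)-\alpha$ for \emph{every} $n\in\mathbb{S}^{N-1}$; this is exactly what the uniform convergence $c^*_\eta\to c^*$ established in Section~\ref{s:monostable-speed} provides.

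The main obstacle is precisely this last step in the monostable case: pointwise convergence of $c^*_\eta(n)$ is classical and already present in \cite{Ber-Ham-02}, but uniformity across all directions is delicate because the minimal monostable wave is not known to be unique and its profile therefore cannot be used directly as a continuous object in $n$. Everything else is essentially bookkeeping, namely ensuring that the various shifts, truncation levels and lower bounds on $t$ appearing in the sub- and supersolution arguments can be selected independently of $n$; this follows from the continuity of $c^*$, the continuity of the ignition profile of Theorem~\ref{th:continuity_igwaves} and the compactness of $\mathbb{S}^{N-1}$.
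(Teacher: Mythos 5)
Your overall strategy---decompose the theorem into its two inequalities, treat the ignition case via the unique pulsating wave as a continuous object in $n$, and transfer the lower spreading to the monostable case by approximating $f$ from below with ignition nonlinearities whose speeds converge uniformly---coincides with the paper's (Sections~\ref{s:lower-spreading}--\ref{s:spread_ignition}), and your upper-spreading argument also matches: the paper uses Proposition~\ref{steep_min_wave} (resp.\ Theorem~\ref{th:continuity_igwaves}) to make the domination by a translated profile and its decay uniform in $n$.

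There is, however, a genuine gap in your treatment of the lower spreading~\eqref{conclusion1}, already in the ignition case. You claim that a suitable shift makes the subsolution $\max\{U^*(\cdot)-\delta_0,\,0\}$ lie below every $u_{0,n}$, using only~\eqref{hyp2} and continuity. This cannot work: \eqref{hyp2} only gives $u_{0,n}\geq\mu$ on $\{x\cdot n\leq -K\}$ with $\mu>\theta$ (resp.\ $\mu>0$) possibly close to $\theta$, while every $z$-translate of $U^*-\delta_0$ tends to $1-\delta_0$ as $x\cdot n\to-\infty$. Since the Fife--McLeod perturbation argument forces $\delta_0$ to be small (it must be absorbed by the monotonicity of $f$ near $1$ and by the steepness of the profile), one has $1-\delta_0>\mu$ in general, so no admissible shift exists. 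Remark~\ref{rem_bug} flags exactly this limitation (\emph{``at least for some large enough initial data''}). The paper resolves it with a preliminary step, claim~\eqref{claim:proof_uniformspread1}: comparing $u_n$ with the solution issued from a compactly supported datum $\mu\chi_{B_R}$, translating along the periodicity lattice, and invoking Weinberger's spreading theorem for large compactly supported data, one first finds a time $t_\varepsilon$, independent of $n$, after which $u_n(t_\varepsilon,\cdot)\geq 1-\varepsilon/2$ on $\{x\cdot n\leq-K\}$; only at that later time can the translated ignition front, which connects $1-\varepsilon/2$ to $-\varepsilon$, be slid below $u_n$ uniformly in $n$, and the comparison principle applied. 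Without this preliminary step your subsolution argument does not start; and since your monostable lower spreading is built on the ignition one, the gap propagates. It is not mere bookkeeping: it rests on a separate spreading result for compactly supported data, which is why the paper devotes real effort to \eqref{claim:proof_uniformspread1}.
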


The difficult part is again to deal with the monostable case. The
proof of the lower spreading property \eqref{conclusion1} will
again rely on an ignition approximation of the monostable
equation, whose traveling waves will serve as nontrivial
subsolutions of \eqref{monostable}. This is performed in
Section~\ref{s:lower-spreading}. Then,
Section~\ref{s:upper-spreading} is devoted to the proof of the
upper spreading property: we prove \eqref{conclusion2} in
subsection~\ref{ss:with} and, for sake of completeness, relax
assumption~\eqref{hyp0} in subsection~\ref{ss:without}.

Last, in Section \ref{s:spread_ignition}, we prove Theorem
\ref{th:unif_spreading} in the ignition case.

\section{Continuity of ignition waves}\label{s:ignition}

Let us here consider a periodic nonlinearity $f$ of the ignition
type, namely satisfying Assumption~\ref{hyp:ignition}. As
announced, we will prove simultaneously the continuity of both
mappings $n  \mapsto c^* (n)$ and $n \mapsto U^* (z,x;n)$, where
we recall that $c^* (n)$ and $U^* (x \cdot n - c^* (n)t,x ;n)$
denote respectively the unique admissible speed and the unique
pulsating wave in the direction $n$, normalized by
\begin{equation}\label{shift-min}
\min _{x\in \R ^N}U^* (0,x;n) = \frac{1+ \theta}{2}.
\end{equation}

\begin{proof} [Proofs of Theorem \ref{th:continuity_speeds} (ignition case) and Theorem \ref{th:continuity_igwaves}]
We first claim (we postpone the proof to the end of this section)
that
\begin{equation}\label{claim-kappa}
\kappa := \inf_{n \in \mathbb{S}^{N-1}} c^* (n) >0.
\end{equation}

Let us now prove that that $c^*(n)$ is also bounded from above,
using
$$(t,x)\mapsto v(t,x):= \min \{ 1 , \theta + C e^{-\lambda (x \cdot n - 2 a_1 \lambda t)} \}$$ as a
supersolution of \eqref{monostable}. Here $C$ and $\lambda$ are
positive constants to be chosen later, and $a_1$ comes from
hypothesis \eqref{ellipticity}. Indeed, when $v < 1$, it satisfies
\begin{eqnarray}
& & \partial_t v - \mbox{div} \, (A(x) \nabla v ) - q(x) \cdot \nabla v -f(x,v) \vspace{3pt}\nonumber\\
& = &  \left[ 2 a_1 \lambda^2 - (n \cdot A n) \lambda^2 + \lambda \,  \mbox{div} \, (An) + \lambda q \cdot n \right] \times C e^{- \lambda (x \cdot n - 2 a_1 \lambda t)} - f(x,v)  \vspace{3pt}\nonumber\\
& \geq &  \left[ a_1 \lambda^2 -  \lambda \,  |\mbox{div} \, (An)|
-  \lambda |q \cdot n|  - M \right] \times C e^{- \lambda (x \cdot
n - 2 a_1 \lambda t)} > 0,\label{gggg}
\end{eqnarray}
where
\begin{equation}\label{M}
M:= \sup_{x \in \R^N,u \in [0,1]} \frac{ f(x,u)}{|u - \theta|} < +
\infty
\end{equation}
 comes from the Lipschitz continuity of $f$, and the last
inequality holds provided that $\lambda$ is large enough,
independently of $n \in \mathbb{S}^{N-1}$. As $1$ is a solution of
\eqref{monostable}, it is then clear that $v$ is a generalized
supersolution of \eqref{monostable}. Then, choosing $C>0$ so that
$v(t=0,x)$ lies above the traveling wave $u^* (t=0,x;n)=U^*(x\cdot
n,x;n)$ at time 0, we can apply the comparison principle
 and obtain that $c^* (n) \leq 2 a_1 \lambda$. Putting this fact together with \eqref{claim-kappa}, we conclude that
\begin{equation}\label{speed_bounds0}
0< \kappa := \inf_{n \in \mathbb{S}^{N-1}} c^* (n) \leq \sup_{n
\in \mathbb{S}^{N-1}} c^* (n) =: K < +\infty.
\end{equation}

\medskip

We now let some sequence of directions $n_k \to n \in
\mathbb{S}^{N-1}$. As we have just shown, the sequence $c^* (n_k)$
is bounded and, up to extraction of a subsequence, $c^*  (n_k) \to
c > 0$. We also choose the shifts $z_k$ so that, for all $k$,
$\max_{x \in \R^N} U^*  (z_k, x;n_k) =  \theta $. In particular,
recalling that $U^*$ is monotonically decreasing with respect to
its first variable, we have for all $k$ that
$$\forall z\geq z_k,
\forall x\in \R ^N , \quad 0 <U^* (z,x;n_k) \leq \theta.
$$
Then
$$u_{k} (t,x) := U^* (z_k + x \cdot n_k ,x + c^* (n_k) t n_k ;n_k)$$
satisfies
\begin{equation}\label{ignition_continuity1}
\partial_t u_{k}= \mbox{div} \, (A(x) \nabla u_{k}) + q(x) \cdot \nabla u_{k} + c^* (n_k) \nabla u_{k} \cdot
n_k,
\end{equation}
for all $t \in \R$ and all $x$ in the half-space $x \cdot n_k \geq
0$ (recall that $U^*$ solves \eqref{eq-tw} and that, in the
ignition case, $f (x,u)=0$ if $0 \leq u \leq \theta$).

\medskip

Let us now find a supersolution of \eqref{ignition_continuity1} of
the exponential type, namely
\begin{equation}\label{def-sur}\overline{u}_k (t,x):= \phi_k (t,x) \times  e^{-\lambda_0  \, x \cdot n_k},
\end{equation}
where $\phi_k$ will be a well-chosen positive and bounded
function.

For any $n \in \mathbb{S}^{N-1}$, one may define (see
Proposition~5.7 in \cite{Ber-Ham-02}) the principal eigenvalue
problem
\begin{equation*}
\left\{
\begin{array}{l}
-L_{n , \lambda} \phi_{n,\lambda}  =   \mu (n,\lambda) \phi_{n,\lambda} \ \mbox{ in } \R^N , \vspace{3pt}\\
\phi_{n,\lambda}>0 \mbox{ is periodic},
\end{array}
\right.
\end{equation*}
where
$$
L_{n,\lambda} \phi := \mbox{div} \, (A \nabla \phi) + \lambda^2 (n
\cdot A n) \phi - \lambda (\mbox{div} (An \phi) + n\cdot A\nabla
\phi) + q \cdot \nabla \phi - \lambda (q \cdot n + \kappa ) \phi,
$$
with $\kappa >0$ given by \eqref{claim-kappa}. In the sequel, the
eigenfunction $\phi_{n,\lambda}$ is normalized so that
$$
\min_{x \in \mathcal{C}} \phi_{n,\lambda} (x) = \theta .
$$
As stated in Proposition~5.7 of \cite{Ber-Ham-02}, the function
 $\lambda \mapsto \mu (n,\lambda)$ is concave and satisfies, for any~$n$, that $\mu (n, 0)= 0$ (any positive constant is clearly a principal eigenfunction of $-L_{n,0}$), and $\partial_\lambda \mu (n,0) = \kappa >0$.

It follows that one can find some small $\lambda_0 >0$ such that,
for any $n \in \mathbb{S}^{N-1}$,
$$\mu (n,\lambda_0) >0.$$ Indeed, proceed by contradiction and assume that for
any $j \in \mathbb{N}^*$, there exists $n_j$ such that $\mu (n_j,
1/j) \leq 0$. Then, by $\mu(n_j,0)=0$ and by concavity, one has
that $\mu (n_j,\lambda) \leq 0$ for all $\lambda > \frac{1}{j}$.
By uniqueness of the principal normalized eigenfunction, it is
straightforward to check that $\mu (n,\lambda)$ depends continuously on both $n$ and $\lambda$, as well as
$\phi_{n,\lambda}$ with respect to the
uniform topology. Thus, one can pass to the limit and conclude
that $\mu (n_\infty, \lambda) \leq 0$ for some $n_\infty = \lim
n_j$ (up to extraction of a subsequence) and all $\lambda \geq 0$.
This contradicts the fact that $\partial_\lambda \mu
(n_\infty,0)=\kappa >0$.

Notice that, by continuity of the eigenfunction with respect to
$n$ and $\lambda$ in the uniform topology, it is clear that for
any bounded set $\Lambda$,
\begin{equation}\label{eigenfunction_bound}
\max_{n \in \mathbb{S}^{N-1}} \max_{\lambda \in \Lambda} \ \max_{x
\in\mathcal{C}} \phi_{n,\lambda} (x)< +\infty.
\end{equation}

Choosing $\lambda_0$ as above and
$$
\phi_k (t,x) := \phi_{n_k,\lambda_0} (x + c^* (n_k)  t n_k),
$$
in \eqref{def-sur}, one gets that
\begin{eqnarray*}
&& \partial_t \overline{u}_{k}- \mbox{div} \, (A(x) \nabla
\overline{u}_{k}) - q(x) \cdot \nabla \overline{u}_{k} - c^* (n_k)
\nabla \overline{u}_{k} \cdot
n_k \vspace{3pt} \\
& = & \left[ c^* (n_k) n_k \cdot \nabla \phi_{n_k ,\lambda_0}
-L_{n_k,\lambda_0} \phi_k  + \lambda_0 ( - \kappa + c^* (n_k))
\phi_k - c^* (n_k) n_k \cdot \nabla \phi_{n_k ,\lambda_0}\right]
 \times e^{-\lambda_0 (x \cdot n_k)} \vspace{3pt} \\
& = &  \left[ \mu (n_k ,\lambda_0) + \lambda _0 (c^* (n_k) -
\kappa) \right] \overline{u}_k >0.
\end{eqnarray*}
In other words, as announced, $\overline{u}_k$ is a supersolution
of \eqref{ignition_continuity1}.

\medskip
Let us now prove that
\begin{equation}\label{ignition_continuity2}
\forall t\in \R,\;  \forall x \cdot n_k \geq 0 , \quad u_{k} (t,
x) \leq \overline{u}_{k} (t,x) .
\end{equation}
Proceed by contradiction and define a sequence of points
$(t_j,x_j)_{j \in \mathbb{N}}$ such that
$$u_{k} (t_j,x_j) - \overline{u}_{k} (t_j,x_j)\to \sup_{t \in \R, x\cdot n_k \geq 0} ( u_{k} (t,x) - \overline{u}_{k}(t,x) ) >0.$$
Now write $x_j = (x_j \cdot n_k)n_k + y_j$ for any $j \geq 0$.
Note that, since $u_{k} (t,x) $ and $\overline{u}_k (t,x)$ both
tend to $0$ as $x \cdot n_k \to +\infty$ uniformly with respect to
$t$, then $(x_j \cdot n_k)_{j \in \mathbb{N}}$ must be bounded.
Thus, up to extraction of a subsequence, we can assume that $x_j
\cdot n_k \to a \geq 0$ as $j \to \infty$. Moreover, since $y_j$
is orthogonal to $n_k$, since $\phi_{n_k,\lambda _0}$ is periodic
and since $U^*$ is periodic with respect to its second variable,
we can assume without loss of generality that $y_j + c^* (n_k)t_j
n_k  \in \mathcal{C}$ the cell of periodicity. As $y_j$ is
orthogonal to $n_k$ for all $j \in \mathbb{N}$, we can extract a
subsequence such that both $y_j \to y_\infty \in \R^N$ and $t_j
\to t_\infty \in \R$.

Finally, $u_k - \overline{u}_k$ reaches its positive maximum, over
$t \in \R$ and $x \cdot n_k \geq 0$, at $(t=t_\infty, x=an_k +
y_\infty)$. Moreover, as
$$\forall  x \cdot n_k = 0 , \quad u_k (0,x) \leq \theta \leq  \overline{u}_k (0,x) ,$$
the maximum is reached at an interior point, which contradicts the
parabolic maximum principle. Thus, \eqref{ignition_continuity2} is
proved.

\medskip

Now, by standard parabolic estimates and up to extraction of a
subsequence, we can assume that, as $k\to \infty$, the sequence
$u^* \left(t- \frac{z_k}{c^* (n_k)} ,x;n_k \right)=U^*(x\cdot
n_k-c^*(n_k)t+z_k,x;n_k)$ converges locally uniformly, along with
its derivatives, to a solution $u_\infty (t,x)$ of
\eqref{monostable}. Moreover, $u_\infty$ satisfies
$$\forall l \in \Pi_{i=1}^N L_i \mathbb{Z} , \quad u_\infty (t,x) = u_\infty \left( t+ \frac{l \cdot n}{c},x+l\right).$$
In a similar way than the discussion after
Definition~\ref{def:puls} of pulsating waves, this means that
$u_\infty (t,x) = U_\infty (x\cdot n- ct,x)$ where $U_\infty(z,x)$
is periodic with respect to its second variable and satisfies
\begin{equation*}
\begin{array}{l}\mbox{div}_x (A \nabla_x U) + (n \cdot An)\, \partial_{zz} U + \mbox{div}_x ( An \partial_z U) + \partial_z (n\cdot A \nabla_x U) \vspace{3pt}\\
\qquad \qquad + \, q \cdot \nabla_x U +  (q \cdot n) \, \partial_z
U   + c\partial _z U+f(x,U)=0\quad \text{ on } \R\times \R ^N.
\end{array}
\end{equation*}
It is then straightforward to retrieve that the sequence $U^*
(z+z_k,x;n_k)$ also converges, along with its derivatives, to this
function $U_\infty(z,x)$. In particular, $U_\infty$ is
nonincreasing with respect to its first variable, and satisfies
the inequalities
$$0 \leq U_\infty (z,x) \leq 1 .$$
Furthermore, noticing that $u^* \left(t- \frac{z_k}{c^* (n_k)}
,x;n_k \right)= u_{k} (t,x- c^* (n_k) tn_k )$,
 it follows from passing to the limit in \eqref{ignition_continuity2}, and thanks to \eqref{eigenfunction_bound},  that
$$u_\infty (t,x) \leq A e^{-\lambda_0 (x\cdot n - c t)},$$
for some $A >0$ and all $x \cdot n \geq c t$.

Thus, $U_\infty (x \cdot n - ct,x) \leq A e^{-\lambda_0 ( x \cdot
n - c
 t)}$, for all $t\in \R$ and $x \cdot n \geq ct$. This means that $U_\infty (z,x)$ converges exponentially to 0 as $z \to +\infty$, uniformly with respect to its second variable:
\begin{equation}\label{ignition_continuity3}
\forall z\geq 0,\; \forall x\in \R ^N, \quad U_\infty (z,x) \leq A
e^{-\lambda_0 z}.
\end{equation}
By monotonicity with respect to its first variable,
$U_\infty(z,x)$ converges as $z \to -\infty$ to some periodic
function $p (x)$. Or, equivalently, $u_\infty (t,x)$ converges as
$t \to +\infty$ to the same function $p (x)$. By standard
parabolic estimates, we get that $p (x)$ is a periodic and
stationary solution of \eqref{monostable}. Let us show that $p
\equiv 1$.  From our choice of the shifts $z_k$ and up to
extraction of another subsequence, there exists some $x_\infty$
such that $U_\infty (0,x_\infty) =\theta$, hence $\max p \geq
\theta $. Assume first that $\max p = \theta$. Then $u_\infty
(t,x) \leq \theta$ for all $t \in \R$ and
 $x \in\R^N$ and, by the strong maximum principle, $u_\infty \equiv \theta$.
 This contradicts the inequality~\eqref{ignition_continuity3} above.
 Therefore, $\max p > \theta$. Using again the strong maximum principle and
 the fact that $f (x,u)=0$ for all $u \leq \theta$ and $x \in \R^N$, we reach
 another contradiction if $\min p \leq \theta$. Therefore $\min p > \theta$ and, thanks to part $(ii)$ of our ignition Assumption~\ref{hyp:ignition}, $p \equiv 1 $
 the unique periodic stationary solution of
 \eqref{monostable} above $\theta$.

From the above analysis, we conclude that $U_\infty (\cdot ,
\cdot) = U^* (\cdot + Z, \cdot ;n)$ the unique pulsating traveling
wave in the $n$ direction with speed $c=c^* (n)$, where $Z$ is the
unique shift such that $ \max_{x \in \R^N} U^* (Z, x;n) = \theta$.
This in fact proves, by uniqueness of the limit, that the whole
sequence $c^*(n_k)$ converges to $c^*(n)$, and that the whole
sequence $U^* (\cdot +z_k ,\cdot;n_k)$ converges locally uniformly
to $U^*(\cdot + Z,\cdot;n)$. This in particular shows the
continuity of the map $n\mapsto c^*(n)$, that is
Theorem~\ref{th:continuity_speeds} in the ignition case.

\medskip

Let us now conclude the proof of
Theorem~\ref{th:continuity_igwaves}. Let us first prove that the
 sequence of shifts $z_k$ is bounded.  The normalization
 \eqref{shift-min} implies that $U^*(0,y_k;n_k)=\frac{1+\theta}2$,
 for some $y_k \in \mathcal C$ that (up to some subsequence)
 converges to some $y\in \mathcal C$. Since $U ^*(z_k,y_k;n_k)\to U^*(Z,y;n)\leq \theta$ and $U^*
(0,y;n) = \frac{1+\theta}{2}$, the monotonicity of traveling waves
enforces $z_k \geq 0$ for $k$ large enough. Now proceed by
contradiction and assume that (up to some subsequence) $z_k \to
+\infty$. Then, for all $-z_k \leq z \leq 0$,
$$
 U^* (z + z_k,y_k;n_k) \leq U^*(0,y_k;n_k)= \frac{1+
\theta}{2}.
$$
Passing to the limit as $k \to +\infty$, we get that
$$
U_\infty (z,y) \leq \frac{1 +\theta }{2}, $$ for all $z \leq 0$.
This contradicts the fact that $U_\infty$ is a pulsating traveling
wave and converges to $1 $ as $z \to -\infty$. 

From the boundedness of the sequence $z_k$, we can now rewrite the
convergence as follows: the sequence $U^* (\cdot ,\cdot;n_k)$
converges locally uniformly to $U^* (\cdot,\cdot;n)$. It now
remains to prove that this
  convergence is in fact uniform with respect to both variables. Note first that uniformity with respect to the second
variable immediately follows from the periodicity. Furthermore,
for a given $\delta >0$, let $K >0$ be such that, for any $x \in
\R^N$,
\begin{equation}\label{loin}
0 \leq U^* (z, x; n ) \leq \frac{\delta}{2} \ \mbox{ and } \ 1 -
\frac \delta 2 \leq U^* ( -z, x; n ) \leq 1, \textrm{ for all } z
\geq K.
\end{equation}
From the locally uniform convergence with respect to the first
variable, we have, for any $k$ large enough,
$$\| U^* (\cdot,\cdot;n_k) - U^* ( \cdot, \cdot; n ) \|_{L^\infty([-K,K]\times \R)} \leq \frac \delta 2 .$$
In particular, $U^* ( K, x; n _k) \leq \delta$ and $1 -\delta \leq
\ U^* ( -K, x; n_k )$, so that, by monotonicity with respect to
the first variable, for any $x \in \R^N$ and $k$ large enough,
\begin{equation}\label{loin2}
0 \leq U^* (z, x; n_k ) \leq \delta\ \mbox{ and } \ 1-   \delta
\leq U^*  ( -z, x; n_k ) \leq 1 , \textrm{ for all } z \geq K.
\end{equation}
 Combining \eqref{loin} and \eqref{loin2}, we get
$$
\| U^* (\cdot,\cdot;n_k) - U^* ( \cdot, \cdot; n )
\|_{L^\infty((-\infty,-K)\cup (K, \infty)\times \R)} \leq  \delta,
$$
for any $k$ large enough. As a result the convergence of $U^*
(\cdot,\cdot;n_k)$ to $U^* (\cdot,\cdot;n)$ is uniform in $\R
\times \R^N$. This ends the proof of the continuity of ignition
waves, that is Theorem~\ref{th:continuity_igwaves}.
\end{proof}

\begin{proof}[Proof of claim \eqref{claim-kappa}]
Proceed by contradiction and assume that there exists a sequence
$n_k \in \mathbb{S}^{N-1}$ such that $c^* (n_k) \to 0$.

Now for any $k$, 
recall that the pulsating wave is normalized by
\begin{equation}\label{shift-min2}
\min _{x\in \R^N} U^* (0,x;n_k) = \frac{1+\theta}{2}.
\end{equation} Up to extraction of a subsequence, we can assume as
above that $n_k \to n$ and
$$u^* \left(t,x;n_k \right) \to u_\infty (t,x),$$
where the convergence is understood to hold locally uniformly, and
$u_\infty (t,x)$ is a solution of~\eqref{ignition_continuity1}. By
the strong maximum principle, it is clear that $0 < u_\infty < 1$.
We also know, by the monotonicity of $U^* (\cdot ,\cdot;n_k)$ with
respect to its first variable, by \eqref{shift-min2} and by
passing to the limit, that
$$
u_\infty (t,x) \geq \frac{1+\theta}{2}, \quad \forall x\cdot n
\leq 0.
$$
 Applying Weinberger's result (see
Theorem \ref{Wein02_cauchy} as well as Remark~\ref{rem_bug}), we
get that the solution spreads at least at speed $c^* (n)$. In
particular, as $t\to+\infty$, $u_\infty(t,x)$ converges locally
uniformly to~1.

On the other hand, we fix $x \in \R^N$ and $s \geq 0$, then we let
some vector $l \in \Pi_{i=1}^N L_i \mathbb{Z}$ be such that $l
\cdot n > 0$. In particular, for any large $k$, one also has that
$l \cdot n_k \geq \frac{l \cdot n}{2} >0.$ Then, for all large
$k$, using the fact that $c^* (n_k) \to 0$ and the monotonicity of
$u^* (\cdot , \cdot ; n_k)$ with respect to its first variable, we
have that
$$u^* \left( s ,x ;n_k \right) \leq u^* \left( \frac{l \cdot n_k}{c^* (n_k)}  , x  ;n_k \right) = u^* \left(0 ,x - l ;n_k \right).$$
By passing to the limit as $k \to +\infty$, we obtain that
$$
u_\infty (s,x) \leq u_\infty (0,x-l)<1,
$$
for all $x \in \R^N$ and $s \geq 0$. This contradicts the locally
uniform convergence of $u_\infty(t,x)$ to $1$ as $t \to + \infty$.
The claim is proved.\end{proof}

\section{Continuity of the monostable minimal speed}\label{s:monostable-speed}

Let us here consider a periodic nonlinearity $f$ of the monostable
type, namely satisfying Assumption~\ref{hyp:monostable}. We will
prove the continuity of the mapping $n  \mapsto c^* (n)$, that is
Theorem~\ref{th:continuity_speeds}. To do so, we introduce a
family $f_\varepsilon (x,u)$, for small $\varepsilon
>0$, of ignition nonlinearities which serve as approximations from
below of the  monostable  nonlinearity $f(x,u)$. Our aim is to
prove that, by passing to the limit as $\varepsilon \to 0$, we
indeed retrieve the dynamics of the monostable equation. This will
be enough to prove Theorem~\ref{th:continuity_speeds}.

\medskip

The family $(f_\varepsilon )_\varepsilon$, for small enough
$\varepsilon >0$, is chosen as follows:
\begin{equation*}
\forall x \in \R^N, \  \left\{\begin{array}{rl}
\forall u \in [-\varepsilon, 0], & f_\varepsilon  (x,u) = 0   \vspace{3pt}\\
 \forall  u \in \left[0 ,  1 -  \varepsilon \right], & f_\varepsilon  (x,u) = f (x,u) \vspace{3pt}\\
  \forall u \in \left[1 - \varepsilon, 1 - \frac{\varepsilon}{2} \right], & f_\varepsilon  (x,u) =  f \left(x,1 - \varepsilon + 2 (u-(1-\varepsilon))
  \right).
\end{array}\right.
\end{equation*}
Notice that $\Vert f_\ep -f\Vert _{L^\infty(-\ep,1)}\to 0$ as $\ep
\to 0$, and that, thanks to Assumption \ref{hyp:monostable}
$(iii)$,
 $f_\ep$ lies below $f$ and  $0<\ep<\ep'$ implies $f_\ep \geq
 f _{\ep '}$. Also, the equation
\begin{equation}\label{ignition_approx}
\partial _t u=  \mbox{div} (A (x) \nabla u)  + q (x) \cdot \nabla u + f_\varepsilon ( x,u),
\end{equation}
where $u$ is to take values between $-\varepsilon$ and $1-\frac
\varepsilon 2$, is of the ignition type in the sense of
Assumption~\ref{hyp:ignition} (where 0, $\theta$, 1 are replaced
by $-\varepsilon$, 0 and $1 - \frac{\varepsilon}{2}$
respectively). In particular, for each $n \in \mathbb{S}^{N-1}$,
there exists a unique ignition pulsating traveling wave
$$u_\varepsilon^* (t,x;n) = U_\varepsilon^* (x \cdot n -
c^*_\varepsilon (n) t,x;n)$$ of \eqref{ignition_approx} in the $n$
direction with speed $c^*_\varepsilon (n) >0$, normalized by
$$
\min _{x\in \R ^N}U_\ep ^*(0,x;n)=\frac 12.
$$
 Furthermore, we
have already proved in the previous section that the mappings $n
\mapsto  c^*_\varepsilon (n)$ and
 $n\mapsto U_\varepsilon^* (\cdot, \cdot;n)$ are continuous (with respect to the uniform topology).

\begin{theo}[Convergence of speeds] \label{th:uc_speeds} Assume that $f$ is of the spatially periodic monostable type, i.e.
$f$ satisfies \eqref{periodicity} and
Assumption~\ref{hyp:monostable}. Let  $f_\varepsilon(x,u)$ be
defined as above.

Then, as $\varepsilon \to 0$,
 $c^*_\varepsilon (n) \nearrow c^* (n)$ uniformly with respect to $n \in
\mathbb{S}^{N-1}$.
\end{theo}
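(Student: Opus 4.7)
The strategy is to establish the joint continuity of the map $(\varepsilon, n) \mapsto c^*_\varepsilon(n)$ on the compact set $[0, \varepsilon_0] \times \mathbb{S}^{N-1}$, adopting the convention $c^*_0 := c^*$. Once this joint continuity is in hand, uniform convergence of $c^*_\varepsilon$ to $c^*$ on $\mathbb{S}^{N-1}$ follows from uniform continuity on the compact domain, and Theorem~\ref{th:continuity_speeds} in the monostable case falls out as the special case $\varepsilon \equiv 0$. The preliminaries are straightforward: since $f_\varepsilon \geq f_{\varepsilon'}$ when $\varepsilon \leq \varepsilon'$ and $f_\varepsilon \leq f$, the comparison principle applied to planar-like Cauchy problems (via Theorem~\ref{Wein02_cauchy}) yields $c^*_\varepsilon \geq c^*_{\varepsilon'}$ and $c^*_\varepsilon \leq c^*$; each $c^*_\varepsilon$ is continuous on $\mathbb{S}^{N-1}$ by Section~\ref{s:ignition}; $c^*$ is uniformly bounded above by some $K$ (monostable analog of \eqref{speed_bounds0}); and the pointwise convergence $c^*_\varepsilon(n) \nearrow c^*(n)$ is the already known result of \cite{Ber-Ham-02}. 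What remains is the joint continuity at $\varepsilon = 0$.

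Fix $n_\infty \in \mathbb{S}^{N-1}$ and let $(\varepsilon_k, n_k) \to (0, n_\infty)$. The lower bound $\liminf c^*_{\varepsilon_k}(n_k) \geq c^*(n_\infty)$ is immediate from monotonicity and the continuity of each $c^*_{\varepsilon'}$: for any fixed $\varepsilon' > 0$, eventually $\varepsilon_k < \varepsilon'$ so that $c^*_{\varepsilon_k}(n_k) \geq c^*_{\varepsilon'}(n_k) \to c^*_{\varepsilon'}(n_\infty)$, and letting $\varepsilon' \to 0$ concludes. For the matching upper bound, I extract a subsequence with $c^*_{\varepsilon_k}(n_k) \to c_\infty$ and consider the normalized ignition waves $u^*_{\varepsilon_k}(t,x;n_k) = U^*_{\varepsilon_k}(x \cdot n_k - c^*_{\varepsilon_k}(n_k) t, x; n_k)$, with $\min_{x \in \R^N} U^*_{\varepsilon_k}(0, x; n_k) = 1/2$. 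Parabolic estimates together with $f_{\varepsilon_k} \to f$ uniformly yield, along a further subsequence, a locally-uniform limit $u_\infty(t,x) = U_\infty(x \cdot n_\infty - c_\infty t, x)$ solving \eqref{monostable}, with $U_\infty$ nonincreasing in its first variable. The exponential supersolution constructed in Section~\ref{s:ignition} (built from the principal eigenvalue $\mu(n_k, \lambda_0)$, which is uniformly positive in $n_k$ for some $\lambda_0 > 0$) passes to the limit and forces $U_\infty(+\infty, \cdot) = 0$; at $-\infty$, $U_\infty$ tends to a periodic stationary state $p$ of \eqref{monostable}, which by the normalization $\min_{x \in \R^N} U_\infty(0, x) \geq 1/2$, Assumption~\ref{hyp:monostable}(ii), and the strong maximum principle (as in the remark following Assumption~\ref{hyp:monostable}), must be identically $1$. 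Hence $U_\infty$ is a bona fide monostable pulsating wave in direction $n_\infty$ at speed $c_\infty$, giving $c_\infty \geq c^*(n_\infty)$.

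The main obstacle is to upgrade this to the equality $c_\infty = c^*(n_\infty)$, since existence of the limit wave alone does not pin the speed down to be minimal. This relies on tracking the precise exponential decay rate of the ignition waves $U^*_{\varepsilon_k}$ at $+\infty$: for each $k$, this rate is tied to a characteristic equation involving the principal eigenvalue $\mu(n_k, \cdot)$ and the speed $c^*_{\varepsilon_k}(n_k)$, and the limiting rate, by continuity of $\mu$ in both arguments, must satisfy the tangency condition characterizing the minimal monostable speed, forcing $c_\infty = c^*(n_\infty)$. This is a uniform-in-$n$ version of the identification argument underlying the pointwise convergence of \cite{Ber-Ham-02}. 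Once $c_\infty = c^*(n_\infty)$ is secured, uniqueness of the limit gives $c^*_{\varepsilon_k}(n_k) \to c^*(n_\infty)$ without extraction, completing the joint continuity step and hence the uniform convergence.
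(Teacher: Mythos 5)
Your framing via joint continuity of $(\varepsilon,n)\mapsto c^*_\varepsilon(n)$ at $\varepsilon=0$ is sound, and your lower bound $\liminf_k c^*_{\varepsilon_k}(n_k)\geq c^*(n_\infty)$ via monotonicity in $\varepsilon$, continuity of each $c^*_{\varepsilon'}$, and the known pointwise convergence is correct — and arguably slicker than what the paper writes. You also correctly identify the crux: after extracting $c_\infty$ and producing a limiting monostable wave, you have $c_\infty\geq c^*(n_\infty)$ (twice over, since this also follows from your lower bound), but you still need the reverse inequality $c_\infty\leq c^*(n_\infty)$.

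That is exactly where your proposal breaks down. The decay-rate/``tangency condition'' mechanism you invoke characterizes $c^*_{\mathrm{lin}}(n)=\min_{\lambda>0}\frac{-\mu_0(n,\lambda)}{\lambda}$, which equals $c^*(n)$ only in the Fisher--KPP case; the paper works under the strictly more general Assumption~\ref{hyp:monostable}, where one may have $c^*(n)>c^*_{\mathrm{lin}}(n)$ (``pushed'' regime), and then there is no tangency at the critical speed — the critical wave decays at the \emph{larger} root of the characteristic equation. Moreover, the ignition waves $U^*_{\varepsilon_k}$ decay at $+\infty$ in a regime where $f_{\varepsilon_k}$ has been set to $0$ on $[-\varepsilon_k,0]$, so their far-field characteristic equation does not even involve $\partial_u f(x,0)$; matching limiting rates to the monostable linearization at $0$ would require a careful boundary-layer analysis you do not supply. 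As written, the step ``the limiting rate must satisfy the tangency condition, forcing $c_\infty=c^*(n_\infty)$'' is a gap.

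The paper closes this gap by an entirely different device: a sliding method. It first proves a lemma giving bounds $|\partial_{zz}U^*_\varepsilon|\leq -C\,\partial_z U^*_\varepsilon$ and $|\nabla_x\partial_z U^*_\varepsilon|\leq -C\,\partial_z U^*_\varepsilon$ with $C$ independent of $\varepsilon$ and $n$, obtained from interior parabolic $L^p$-estimates and a Harnack inequality applied to $v=\partial_t u^*_\varepsilon>0$, together with the space-time periodicity of the wave. Supposing $c_\infty\geq c^*(n)+\delta$, one plugs $U^*_{\varepsilon_k}(\cdot,\cdot;n_k)$ into the traveling-wave equation~\eqref{eq-tw} for direction $n$ and speed $c^*(n)$; the lemma turns all the error terms (from $n_k\neq n$, $c^*_{\varepsilon_k}(n_k)\neq c^*(n)$, $f_{\varepsilon_k}\neq f$) into quantities controlled by $-\partial_z U^*_{\varepsilon_k}$, yielding a strict supersolution for $k$ large. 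Sliding $U^*_{\varepsilon_k}$ against the minimal monostable wave $U^*(\cdot,\cdot;n)$ and evaluating at a first touching point then contradicts the strict inequality, proving $c_\infty=c^*(n)$. You would need this lemma (or some genuine substitute valid for pushed fronts) to complete your argument.
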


As mentioned before, pointwise convergence was shown
in~\cite{Ber-Ham-02}, where the goal was to prove existence of
monostable traveling waves for
 the range of speeds $[c^* (n) , +\infty)$. Here we prove that the convergence is actually uniform,
 which together with the continuity of speeds in the ignition case, immediately insures the continuity of $n \mapsto c^* (n)$, that is Theorem~\ref{th:continuity_speeds} in the monostable case.

\begin{proof} First note that, for any fixed $n\in \mathbb{S}^{N-1}$ and $\varepsilon >0$, $c^*_{\varepsilon} (n) \leq c^* (n)$. Indeed, recalling that $U_\varepsilon^* (z,x;n)$
connects $1 - \frac{\varepsilon}{2}$ to $-\varepsilon$, one can
find some shift $Z\in \R$ such that $U^*_\varepsilon (z+Z,x;n)
\leq U^* (z,x;n)$, where $U^*$ denotes a monostable pulsating
traveling wave --- connecting 1 to 0--- with the minimal speed
$c^* (n)$. By a comparison argument, it follows that
$c^*_\varepsilon (n) \leq c^* (n)$. It is also very similar to
check that, for any $n \in \mathbb{S}^{N-1}$,
$0<\varepsilon<\varepsilon '$ implies $c_\varepsilon ^*(n)\geq
c_{\varepsilon '}^*(n)$.

Let us now consider some sequences $\varepsilon_k \to 0$ and $n_k
\to n$. Consider the estimate \eqref{speed_bounds0} where $\kappa$ and $K$ should {\it a priori} depend on $\varepsilon$. First, it is clear from the above that $\kappa (\varepsilon) := \inf_n  c^*_\varepsilon (n)$ is
   nonincreasing with respect to $\varepsilon$. Also, since
   $$
   \sup _{0<\ep\leq \ep _0} M_\ep:=\sup  _{0<\ep\leq \ep _0}\quad  \sup_{x \in \R^N,u \in [-\ep,1-\frac \ep 2]} \frac{ f_\ep (x,u)}{|u|} < +
\infty
$$
(compare with \eqref{M}), arguing as we did to derive
\eqref{speed_bounds0}, we see that $K (\varepsilon) := \sup_n
c^*_\varepsilon (n)$ is uniformly
    bounded from above. As a result, we have
    \begin{equation}\label{speed_bounds0bis}
0< \kappa := \inf_{0<\ep\leq \ep _0} \inf_{n \in \mathbb{S}^{N-1}} c^* (n) \leq\sup _{0<\ep \leq \ep _0} \sup_{n
\in \mathbb{S}^{N-1}} c^* (n) =: K < +\infty.
\end{equation}
Hence, we can assume,
 up to extraction of a subsequence, that $c^*_{\varepsilon_k} (n_k)\to c_\infty >0 $ as $k \to \infty$. 
    In order to prove Theorem~\ref{th:uc_speeds}, we have to prove that $c_\infty = c^* (n)$.

    \medskip

We begin by showing that $ U^*_{\varepsilon_k} (z,x;n_k)$
converges as $k \to \infty$ to a monostable pulsating traveling
wave of \eqref{monostable},
  up to extraction of a subsequence. Indeed, proceeding as before, one can use standard parabolic estimates to
extract a converging subsequence of pulsating ignition traveling
waves, such that
$$
U^*_{\varepsilon_k} (z,x;n_k) \to U_\infty (z,x),
$$
as $k \to +\infty$ locally uniformly with respect to $(z,x) \in \R
\times \R^N$. Furthermore, $0 \leq U_\infty (z,x ) \leq 1$ solves
\eqref{eq-tw} with $c = c_\infty$, is nonincreasing with respect
to $z$, periodic with respect to $x$, and satisfies $\min _{x\in
\R ^N}U_\infty (0, x) = \frac{1}{2}$. In particular, $U_\infty$
converges as $z \to \pm \infty$ to two periodic stationary
 solutions of \eqref{monostable}, which under the monostable Assumption~\ref{hyp:monostable} can only be 0 and 1. We can conclude that $U_\infty$
 is a monostable pulsating traveling wave with speed $c_\infty$, hence $c_\infty \geq c^* (n)$.

We now prove that $c_\infty = c^* (n)$. Notice that
$f_\varepsilon$ lies below $f$ but, since the direction varies, we
cannot use a simple comparison argument to conclude that $c_\infty
\leq c^* (n)$. Instead, we will use a sliding method as
in~\cite{Ber-Ham-02}. To do so, we shall need the following lemma.
\begin{lem}[Some uniform estimates]
There exists $C>0$ such that, for any small $\varepsilon >0$ and
$n \in \mathbb{S}^{N-1}$, the ignition pulsating traveling wave
$U_\varepsilon^* (z,x;n)$ satisfies
$$| \partial_{zz} U^*_\varepsilon (\cdot, \cdot;n) |  \leq - C  \partial_{z} U^*_\varepsilon (\cdot ,\cdot ,n),
\quad| \nabla_x \partial_z U^*_\varepsilon(\cdot ,\cdot ;n) | \leq
- C
\partial_z U^*_\varepsilon (\cdot,\cdot,n).$$
\end{lem}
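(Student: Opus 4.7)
The plan is to pass to the parabolic formulation of the wave and apply parabolic Harnack together with interior Schauder estimates to its time derivative. Introduce
\[ V_\varepsilon(t,x) := \partial_t u^*_\varepsilon(t,x;n) = -c^*_\varepsilon(n)\, \partial_z U^*_\varepsilon(x\cdot n - c^*_\varepsilon(n)t,\, x;n), \]
which is strictly positive by time-monotonicity of the pulsating wave and the strong maximum principle. If I can establish a pointwise bound $|\partial_t V_\varepsilon| + |\nabla_x V_\varepsilon| \leq \tilde C\, V_\varepsilon$ with $\tilde C$ independent of $\varepsilon$ and $n \in \mathbb{S}^{N-1}$, then the conclusion will follow from the chain-rule identities $\partial_t V_\varepsilon = c^*_\varepsilon(n)^2 \partial_{zz} U^*_\varepsilon$ and $\nabla_x V_\varepsilon = -c^*_\varepsilon(n)(n\,\partial_{zz} U^*_\varepsilon + \nabla_x \partial_z U^*_\varepsilon)$, combined with the uniform two-sided bound $0 < \kappa \leq c^*_\varepsilon(n) \leq K < \infty$ provided by \eqref{speed_bounds0bis}.

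First I would derive a linear parabolic equation for $V_\varepsilon$ with $L^\infty$ coefficients bounded independently of $\varepsilon$ and $n$. Since $f_\varepsilon$ is only Lipschitz in $u$, I will not differentiate \eqref{monostable} directly; instead, I work with the difference quotient $V_\varepsilon^h := h^{-1}(u^*_\varepsilon(\cdot+h,\cdot;n) - u^*_\varepsilon(\cdot,\cdot;n))$, which is nonnegative by time-monotonicity and solves
\[ \partial_t V_\varepsilon^h = \mathrm{div}(A\nabla V_\varepsilon^h) + q\cdot\nabla V_\varepsilon^h + g_\varepsilon^h(t,x)\, V_\varepsilon^h, \]
where $g_\varepsilon^h$ is a bounded slope of $f_\varepsilon$ in $u$, and therefore satisfies $\|g_\varepsilon^h\|_\infty \leq \mathrm{Lip}_u(f_\varepsilon) \leq M$ uniformly in $\varepsilon \in (0,\varepsilon_0]$, $h$, and $n$. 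Standard interior parabolic estimates allow passage to the limit $h \to 0$ and give the analogous linear equation for $V_\varepsilon$.

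Next, at any fixed point $(t_0,x_0)$ I would apply the parabolic Harnack inequality for positive solutions of linear uniformly parabolic equations with $L^\infty$ coefficients on a unit-size cylinder $Q^-$ preceding $(t_0,x_0)$, yielding $\sup_{Q^-} V_\varepsilon \leq C_H\, V_\varepsilon(t_0,x_0)$, and combine it with interior $W^{2,p}$--Calder\'on--Zygmund (or $C^{1,\alpha}$-Schauder) estimates, which provide $|\partial_t V_\varepsilon(t_0,x_0)| + |\nabla_x V_\varepsilon(t_0,x_0)| \leq C_S \sup_{Q^-} V_\varepsilon$. Together they deliver $|\partial_t V_\varepsilon| + |\nabla_x V_\varepsilon| \leq C_H C_S\, V_\varepsilon$ pointwise, from which the lemma follows via the chain rule above.

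The main obstacle is the uniformity of the Harnack and Schauder constants $C_H$, $C_S$ in $\varepsilon$ and $n$. This rests on three uniform ingredients: the ellipticity bounds $a_1, a_2$ from \eqref{ellipticity}, the uniform Lipschitz bound $M$ on the family $\{f_\varepsilon\}_{\varepsilon \in (0,\varepsilon_0]}$ in $u$, and the two-sided bound \eqref{speed_bounds0bis} on the speeds $c^*_\varepsilon(n)$ (which implicitly enters the correspondence between the $(t,x)$ and $(z,x)$ pictures and controls the drift when converting between them). The limited regularity of $f_\varepsilon$ at the breakpoints $u = 0$ and $u = 1-\varepsilon$ is harmless, since Harnack and $W^{2,p}$ estimates only require $L^\infty$ coefficients.
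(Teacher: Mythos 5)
Your overall framework is right — pass to the time derivative $V_\varepsilon = \partial_t u^*_\varepsilon$, establish a linear parabolic equation for it with uniformly bounded coefficients, then combine interior parabolic regularity with Harnack, and finally convert back to the $(z,x)$ variables using the uniform speed bounds. This is exactly the skeleton of the paper's proof. However, there is a genuine gap in the way you combine Harnack and Schauder/$W^{2,p}$.

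You claim: (a) Harnack gives $\sup_{Q^-} V_\varepsilon \leq C_H\, V_\varepsilon(t_0,x_0)$ for a cylinder $Q^-$ strictly preceding $(t_0,x_0)$, and (b) Schauder gives $|\partial_t V_\varepsilon(t_0,x_0)| + |\nabla_x V_\varepsilon(t_0,x_0)| \leq C_S \sup_{Q^-} V_\varepsilon$. The problem is that these two $Q^-$'s cannot be the same cylinder. Interior parabolic $L^p$/Schauder estimates bound the derivatives at $(t_0,x_0)$ by the sup of $V_\varepsilon$ over a backward cylinder whose final time is $t_0$, i.e.\ one that \emph{contains} $(t_0,x_0)$; while parabolic Harnack bounds the sup over an \emph{earlier, strictly separated} cylinder by the value at a \emph{later} point. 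If $Q^-$ is truly preceding $(t_0,x_0)$ so that Harnack in (a) holds, then (b) fails because the derivative bound at the terminal point of a cylinder cannot be obtained from values at strictly earlier times only. If instead $Q^-$ contains $(t_0,x_0)$ so that (b) holds, then Harnack only bounds $\sup_{Q^-} V_\varepsilon$ by $V_\varepsilon(t_1,x_1)$ at a strictly \emph{later} time $t_1 > t_0$, and you have no way to relate $V_\varepsilon(t_1,x_1)$ back to $V_\varepsilon(t_0,x_0)$ — forward-in-time Harnack-type bounds are false for parabolic equations in general.

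The missing ingredient is the \emph{space-time periodicity} of the pulsating wave. This is exactly what the paper uses to close the loop: it selects, for each $n$, a lattice vector $k(n)L$ with $k_i(n)\in\{-1,0,1\}$ so that $k(n)L\cdot n$ is uniformly bounded between two positive constants, hence the time shift $\tau(n,\varepsilon) = \tfrac{k(n)L\cdot n}{c^*_\varepsilon(n)}$ is uniformly bounded away from $0$ and $\infty$ by \eqref{speed_bounds0bis}. Applying Harnack with the later sample point $(t_0 + \tau(n,\varepsilon),\, x_0 + k(n)L)$ and using $V_\varepsilon(t_0+\tau(n,\varepsilon), x_0 + k(n)L) = V_\varepsilon(t_0,x_0)$ brings the estimate back to $(t_0,x_0)$. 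Without that periodicity step, the combination of Harnack and Schauder does not yield a pointwise bound $|\partial_t V_\varepsilon| + |\nabla_x V_\varepsilon| \leq C\, V_\varepsilon$. So: your setup, coefficient bounds, and conversion via the chain rule are all correct, and the difference-quotient workaround for the Lipschitz nonlinearity is a fine alternative to the paper's a.e.\ differentiation, but you need to add the uniform control of the periodicity shift and use it to reconcile the two cylinders.
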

\begin{proof}
Let us define $u^*_\varepsilon (t,x):= U^*_\varepsilon (x \cdot n
- c^*_\varepsilon (n) t,x;n)$. Then $v(t,x): =
\partial_t u^* _\varepsilon(t,x)>0$ satisfies
$$
\partial_t v = \mbox{div} (A (x) \nabla v) + q (x) \cdot \nabla v + v\, \partial_u f_\varepsilon
(x,u^*_\varepsilon), \quad \text{ a.e. in $\R \times \R ^N$.}
$$
From our definition of the ignition approximation
$f_\varepsilon(x,u)$, we see that  $\Vert \partial_u f_\varepsilon
\Vert _{L^\infty( \R^N \times (-\ep,1-\frac \ep 2))}$ is uniformly
bounded, independently on small $\varepsilon >0$ and $n\in \mathbb
S ^{N-1}$. Therefore, from the interior parabolic $L^p$-estimates
(see \cite[Theorem 48.1]{Qui-Sou} for instance) and Sobolev
embedding theorem,  one gets
\begin{equation}\label{schauder}
\forall (t_0,x_0) \in \R \times \R^N \, ,  \ | \partial_t v
(t_0,x_0) | + | \nabla_x v (t_0,x_0) | \leq C_1 \max_{t_0 -1 \leq
t \leq t_0 , |x - x_0| \leq 1 } v (t,x),
\end{equation}
for some $C_1 >0$ which is independent on $t_0$, $x_0$, small
$\varepsilon>0$ and $n\in \mathbb S ^{N-1}$.

Furthermore, for any $n \in \mathbb{S}^{N-1}$, choose $N$ integers
$k_i (n) \in \{ -1,0,1 \}$ such that
$$k (n)L \cdot n = \max_{k_1 , ..., k_N \in \{-1,0,1\} } (k_1  L_1 , ... k_N  L_N) \cdot n ,$$
where $k (n)L :=(k_1 (n) L_1 , ... k_N (n) L_N)$. Then
$$
0 < \inf_{n \in \mathbb{S}^{N-1}} k(n)L \cdot n \leq \sup_{n \in
\mathbb{S}^{N-1}} k(n)L \cdot n < +\infty,
$$ and hence, thanks to
\eqref{speed_bounds0bis},
$$
0 < \inf_{0<\varepsilon \leq \varepsilon_0, n \in \mathbb{S}^{N-1}} \frac{k(n) L \cdot n}{c^*_\varepsilon (n)} \leq
 \sup_{0<\varepsilon \leq \varepsilon_0 ,  n \in \mathbb{S}^{N-1}} \frac{k(n) L \cdot n}{c^*_\varepsilon (n)} < + \infty.
 $$
 By the parabolic Harnack inequality for strong solutions (see \cite[Chapter VII]{Lie}
 for instance), we get
\begin{equation}\label{harnack}\forall (t_0,x_0) \in \R \times \R^N \, , \
\max_{t_0 -1 \leq t \leq t_0 , |x - x_0| \leq 1 } v (t,x) \leq C_2
v \left( t_0 + \frac{ k(n) L \cdot n}{c^*_\varepsilon (n)} , x_0 +
k(n) L \right), \end{equation} for some $C_2
>0$ which is also independent on $t_0$, $x_0$, small
$\varepsilon>0$ and $n\in \mathbb S ^{N-1}$.

 Combining \eqref{schauder}, \eqref{harnack} and the space-time
periodicity of the traveling wave, we get
$$
\forall (t_0,x_0) \in \R \times \R^N \, , \ |\partial_t v (t_0,x_0
) | + | \nabla_x v (t_0,x_0) |  \leq C_3v(t_0,x_0),
$$
with $C_3=C_1C_2$. Now recall that $U^*_\varepsilon (z,x;n) =
u^*_\varepsilon \left( \frac{x\cdot n-z }{c^*_\varepsilon (n)} , x
\right).$ Thus
$$| \partial_{zz} U^*_\varepsilon | = \frac{1}{c^*_\varepsilon (n)^2} |\partial_t v | \leq \frac{C_3}{c^*_\varepsilon (n)^2} v =-\frac{C_3}{c_\ep^*(n)} \partial_{z} U^*_\varepsilon,$$
$$| \nabla_x \partial_z U^*_\varepsilon | \leq \left| \frac{-1}{c^*_\varepsilon (n)^2} \, \partial_t v \, n - \frac{1}{c^*_\varepsilon (n)} \nabla_x v \right| \leq
\left(\frac 1{c^*_\ep(n)^2}+\frac 1{c^*_\ep(n)}\right)C_3v=-\left(\frac 1
{c^*_\ep(n)}+1\right)C_3 \partial_z U^*_\varepsilon.
$$
Since $\kappa=\inf_{0 < \varepsilon \leq \varepsilon_0} \inf_{n\in
\mathbb{S}^{N-1}} c^*_\varepsilon (n)
>0$, this proves the lemma.
\end{proof}

Let us now go back to the proof of Theorem~\ref{th:uc_speeds}.
Proceed by contradiction and assume that
 $c_\infty \geq c^* (n) + \delta$ for some $\delta >0$. We
 plug $U^*_{\varepsilon_k}(\cdot,\cdot;n_k)$ into equation~\eqref{eq-tw} satisfied by
 $U^*(\cdot,\cdot;n)$ and, thanks to the above lemma, get
\begin{eqnarray}
& &  \mbox{div}_x (A \nabla_x U^*_{\varepsilon_k}) + (n \cdot
An)\, \partial_{zz} U^*_{\varepsilon_k} +
 \mbox{div}_x ( An \,  \partial_z U^*_{\varepsilon_k})  + \partial_z (n\cdot A \nabla_x U^*_{\varepsilon_k}) \vspace{3pt} \nonumber \\
&& \qquad \quad + \, q \cdot \nabla_x U^*_{\varepsilon_k} +  (q \cdot n) \, \partial_z U^*_{\varepsilon_k}   + c^*(n)\partial _z U^*_{\varepsilon_k} +f(x,U^*_{\varepsilon_k}) \vspace{3pt}\nonumber \\
& = &  (n \cdot An - n_k \cdot A n_k) \partial_{zz} U^*_{\varepsilon_k} + \mbox{div}_x ((An - An_k) \times \partial_z U^*_{\varepsilon_k}) \nonumber \\
& & \qquad \quad + \partial_z ((n - n_k)\cdot A \nabla_x U^*_{\varepsilon_k}) + (q \cdot (n-n_k))\partial_z U^*_{\varepsilon_k} \nonumber  \\
& & \qquad \quad + (c^* (n) -c^*_{\varepsilon_k} (n_k)) \partial_z U^*_{\varepsilon_k}  + f (x,U^*_{\varepsilon_k}) - f_{\varepsilon_k} (x,U^*_{\varepsilon_k}) \vspace{3pt} \nonumber \\
& \geq & \left[ 4 a_2 C | n- n_k|  + \mbox{div}_x \, A (n-n_k)  +
|q| | n- n_k |  - \frac \delta 2\right] \partial_{z}
U^*_{\varepsilon_k}
\vspace{3pt}\nonumber \\
 & \geq & -\frac{ \delta}{3}
\partial_z U^*_{\varepsilon_k} >0,\label{ineg}
\end{eqnarray}
provided $k$ is large enough, and where $a_2>0$ comes from
\eqref{ellipticity}. We now use the sliding method. From the
asymptotics
$$
U^*_{\varepsilon_k} (+\infty, \cdot ;n_k) = - \varepsilon _k < 0 =
U^* (+\infty, \cdot ;n),
$$
$$
U^*_{\varepsilon_k} (-\infty,\cdot ;n_k) = 1 - \frac{ \varepsilon
_k}{2} < 1 = U^* (+\infty, \cdot;n),
$$
one can define
$$\tau_0 := \inf \{ \tau:\,  U^*_{\varepsilon_k} (z + \tau,x ;n_k) < U^* (z,x;n), \forall z \in \R, \forall x \in \R ^N \} \in \R.$$
Then, using again the asymptotics as well as the periodicity with
respect to $x$ of any pulsating wave, there is some first touching
point $(z_0,x_0)\in\R \times \R^N$ such that
$$
U^*_{\varepsilon_k} (z_0 + \tau _0,x_0;n_k) = U^* (z_0,x_0;n),
\quad \text{ and }\quad  U^*_{\varepsilon_k} (\cdot + \tau
_0,\cdot;n_k) \leq U^* (\cdot,\cdot;n).$$ Substracting the
equation~\eqref{eq-tw} satisfied by $U^*(z,x;n)$ to the
inequality~\eqref{ineg} satisfied by $U^*_{\varepsilon_k}(z+\tau
_0,x;n_k)$ above, and estimating it at point $(z_0,x_0)$, we get
that
$$
0 \geq - \frac{\delta}{3} \partial_z U^*_{\varepsilon_k}(z_0+\tau
_0,x_0;n_k)
>0,
$$
a contradiction. Hence, $c_\infty = c^* (n)$, and the convergence
of $c^*_\varepsilon(n)$ to $c^*(n)$ is uniform.
\end{proof}

\begin{rem}[On the convergence of profiles]
The argument above also shows that the ignition traveling waves
converge locally uniformly,
 up to a subsequence, to a traveling wave with minimal speed of the monostable equation.
  Proceeding as in Section~\ref{s:ignition} and thanks to
  the monotonicity of traveling waves, one can check that this convergence is actually uniform in time
   and space. In particular, they do not flatten as the parameter $\varepsilon \to 0$. However, as the uniqueness
    of the monostable traveling wave with minimal speed is not known \cite{Ham-Roq}, we cannot conclude on the convergence of the whole sequence.
\end{rem}

\section{The uniform lower spreading}\label{s:lower-spreading}

In this section and the next, we will prove
Theorem~\ref{th:unif_spreading} under the monostable assumption.
The easier ignition case will be dealt with in the last section.

We begin here with the uniform lower spreading
property~\eqref{conclusion1} of Theorem~\ref{th:unif_spreading}.
The argument again relies on the approximation from below by an
ignition type problem, and follow the footsteps of the proof of
Theorem~\ref{th:uc_speeds}.

\begin{proof}[Proof of \eqref{conclusion1}]
Recall that $f_\varepsilon(x,u)$ is an ignition type nonlinearity
which approximates $f(x,u)$ from below as $\varepsilon \to 0$. We
still
 denote $u_\varepsilon^* (t,x) = U^*_\varepsilon (x\cdot n - c^*_\varepsilon (n) t,x;n)$ the unique ignition pulsating traveling wave
  of \eqref{ignition_approx} in the direction $n$, normalized by $\min _{x\in \R ^N}U_\ep ^*(0,x;n)=\frac 12$.

As $f_\varepsilon \leq f$, it is clear that $u^*_\varepsilon$ is a
subsolution of \eqref{monostable}, whose speed is arbitrary close
to $c^* (n)$ as $\varepsilon \to 0$ thanks to
Theorem~\ref{th:uc_speeds}. This leads back to Weinberger's result
\cite{Wein02}, namely the fact that for any planar-like initial
datum  in the $n$ direction, the solution of \eqref{monostable}
spreads with speed \lq\lq at least'' $c^* (n)$ in the $n$
direction.

Let us now make this spreading property uniform with respect to
the family of solutions $(u_n)_{n \in \mathbb{S}^{N-1}}$, as
stated in Theorem~\ref{th:unif_spreading}. In the following $\mu$
and $K$ are as in assumption~\eqref{hyp2} (monostable case). Let
$\alpha
>0$ and $\delta >0$ be given. In view of assumption \eqref{hyp0} and the comparison principle we
have $u_n(t,x)\leq 1$. Hence to prove \eqref{conclusion1}, we need
to find $\tau
>0$ so that
\begin{equation}\label{conclusion1-dessous} \inf_{n \in
\mathbb{S}^{N-1}} \ \inf_{x \cdot n \leq (c^*(n) - \alpha)t} u_n
(t,x) \geq 1 - \delta,
\end{equation}
holds for all $t\geq \tau$.

\medskip

In view of Theorem~\ref{th:uc_speeds}, we can fix $\varepsilon>0 $
small enough so that, for all $n\in \mathbb S ^{N-1}$,
\begin{equation}\label{speed-prime}
c^*_{\varepsilon}(n) \geq c^* (n) - \frac \alpha 2 .
\end{equation}
We then claim that one can find some $t_\varepsilon>0$ such that
\begin{equation}\label{claim:proof_uniformspread1}
u_n (t_\varepsilon,x) \geq 1 - \frac{\varepsilon}{2},
\end{equation}
for all $n \in \mathbb{S}^{N-1}$ and all $x$ such that $x \cdot n
\leq -K$. We insist on the fact that $t_\varepsilon$ does not
depend on $n \in \mathbb{S}^{N-1}$. To prove
\eqref{claim:proof_uniformspread1}, let us define
$$
\mathcal{S}= \{x \in \R^N : \; x \cdot n \leq c^* (n) \mbox{ for all } n \in \mathbb{S}^{N-1} \}.
$$
We know from Theorem \ref{th:continuity_speeds} that the mapping
$n \mapsto c^* (n)$ is positive and continuous, hence
$\mathcal{S}$ has nonempty interior. It is then known (see
Theorem~2.3 in \cite{Wein02}, as well as Remark~\ref{rem_bug}
above) that for compactly supported initial data \lq\lq with large
enough support'', the associated solution of \eqref{monostable}
converges locally uniformly to 1 as $t\to +\infty$ (in fact, even
uniformly on the expanding sets $t \mathcal{S}'$ for any subset
$\mathcal{S'}$ of the interior of $\mathcal{S}$; also, under the
additional assumption that $0$ is linearly unstable with respect
to the periodic problem, this is even  true for any non trivial
and compactly supported initial datum, regardless of its
size~\cite{Ber-Ham-Roq1}, \cite{Ber-Ham-Nad08}). More precisely,
let $u_R$ be the solution of \eqref{monostable} associated with
the initial datum $u_{0,R} (x) = \mu \times \chi_{B_R} (x)$, where
$R$ is a large but fixed positive constant (depending on $\mu$)
which we can assume to be larger than $2 \sqrt{N} \max_i L_i$.
Here $B_R$ denotes the ball of radius $R$ centered at the origin.
Then $u_R$ converges locally uniformly to 1 as $t \to +\infty$. In
particular, $$u_R (t_\varepsilon,x) \geq 1 -
\frac{\varepsilon}{2},$$ for some $t_\varepsilon >0$ and all $x
\in B_{2R}$.  Besides, for $x_0 \in \Pi_{i =1}^{N} L_i \mathbb{Z}$
such that $x_0 \cdot n \leq -K-R$, we have --- thanks to
\eqref{hyp2}--- that $u_n(0,x+x_0)\geq u_R(0,x)$. Then, by the
comparison principle,
$$
\forall x \in B_{2R}, \quad u_n (t_\varepsilon,x+x_0)  \geq
u_R(t_\varepsilon,x)\geq 1-\frac \varepsilon 2 .
$$
Since $R > 2 \sqrt{N} \max_i L_i$, for all $x \cdot n \leq -K$,
there exists $x_0  \in \Pi_{i =1}^{N} L_i \mathbb{Z}$ such that
$x_0 \cdot n \leq -K -R$ and $x \in B_{2R} (x_0)$. Thus, we obtain
$u_n (t_\varepsilon ,x) \geq 1 - \frac{\varepsilon}{2}$, for all
$n \in \mathbb{S}^{N-1}$ and $x \cdot n \leq -K$, that is
claim~\eqref{claim:proof_uniformspread1}.

\medskip

Now, recall that $U^*_{\varepsilon} (\cdot , \cdot ;n)$ is the
pulsating traveling wave of equation~\eqref{ignition_approx} in
the direction~$n$, connecting $1-\frac{\varepsilon}{2}$ to
$-\varepsilon$. Hence, it follows from
\eqref{claim:proof_uniformspread1} that, for any $n\in \mathbb S
^{N-1}$, one can find some shift $Z_n$ such that
\begin{equation}\label{ineq_1234}
u_n (t_\varepsilon,x) \geq  U^*_{\varepsilon} ( x \cdot n -
c^*_{\varepsilon} (n) t_\varepsilon + Z_n,x;n).
\end{equation}
Actually, it suffices to select
$$
Z_n := \min \{ z \in \R:\; \  \min_{x \in \mathcal{C}} U^*_{\varepsilon} ( - K  -
c^*_{\varepsilon} (n) t_\varepsilon + z,x;n)  \leq 0 \} \in
(0,\infty).
$$ Moreover, from the
uniform continuity of ignition traveling waves with respect to the
direction, namely Theorem~\ref{th:continuity_igwaves}, it is
straightforward that the family $(U^*_{\varepsilon} (z,x;n))_{n
\in \mathbb{S}^{N-1}}$ converges to $-\varepsilon$ as $z \to
+\infty$ uniformly with respect to $n \in \mathbb{S}^{N-1}$.
Therefore, we can also define the bounded real number $ Z :=
\sup_{n \in \mathbb{S}^{N-1}} Z_n\in (0,\infty)$, so that
\eqref{ineq_1234} is improved to
$$
\forall n\in \mathbb S ^{N-1}, \quad u_n (t_\varepsilon,x) \geq
U^*_{\varepsilon} ( x \cdot n - c^*_{\varepsilon} (n)
t_\varepsilon + Z,x;n).
$$
Then we can apply the parabolic comparison principle to get
\begin{equation}\label{ineq_12345}
\forall t \geq t_\varepsilon, \forall x \in \R^N,  \forall n \in
\mathbb{S}^{N-1}, \quad u_n (t,x) \geq U^*_{\varepsilon} (x \cdot
n - c^*_{\varepsilon} (n) t + Z , x;n).
\end{equation}
Therefore it follows from \eqref{speed-prime}, \eqref{ineq_12345}
and the monotonicity of the front that
\begin{equation}\label{low_uspread1}
u_n (t,x) \geq U^*_\varepsilon \left( -\frac{\alpha}{2} t + Z ,
x;n \right),
\end{equation}
for all $n \in \mathbb{S}^{N-1}$, all $t \geq t_\varepsilon$ and
all $x$ such that $x \cdot n \leq (c^* (n) - \alpha)t$. Using
again the uniform continuity of ignition traveling waves with
respect to the direction, namely
Theorem~\ref{th:continuity_igwaves}, one can find some shift
$Z'>0$ such that, for all $n \in \mathbb{S}^{N-1}$,
\begin{equation}\label{low_uspread2}
z\leq - Z' \ \Rightarrow \ U^*_\varepsilon (z,x;n) \geq 1 -
\varepsilon.
\end{equation}
Up to decreasing $\varepsilon$, we can assume that
$\varepsilon<\delta$ without loss of generality. Now choose $\tau
\geq t_\varepsilon$ such that $-\frac{\alpha}{2} \tau + Z \leq
Z'$. Then, we get from \eqref{low_uspread1} and
\eqref{low_uspread2} that
$$
u_n (t,x) \geq 1 - \delta,
$$
for all $n \in \mathbb{S}^{N-1}$,  $t \geq \tau$ and $x$ such that
$x \cdot n \leq (c^* (n) - \alpha) t$. We have thus proved
\eqref{conclusion1-dessous}, and hence \eqref{conclusion1}.
\end{proof}

\section{The uniform upper spreading}\label{s:upper-spreading}

We conclude here the proof of Theorem \ref{th:unif_spreading} (monostable case), by
proving the uniform upper spreading~\eqref{conclusion2} in
subsection \ref{ss:with}. Then in subsection \ref{ss:without} we
again prove \eqref{conclusion2} --- together with the uniform
lower spreading property \eqref{conclusion1}--- when
assumption~\eqref{hyp0} is relaxed.

 \subsection{Proof of \eqref{conclusion2}}\label{ss:with}

We begin by proving some kind of uniform steepness of the
monostable minimal waves, which in turn will easily imply
\eqref{conclusion2}.

\begin{prop}[Steepness of critical waves]\label{steep_min_wave}
Assume that $f$ is of the spatially periodic monostable type, i.e.
$f$ satisfies \eqref{periodicity} and
Assumption~\ref{hyp:monostable}.

Let $u^* (t,x;n) = U^* (x \cdot n - c^* (n) t , x ;n)$ be a family
of increasing in time pulsating
 traveling waves of \eqref{monostable}, with minimal speed $c^*(n)$
 in each direction $n\in \mathbb{S}^{N-1}$, normalized by  $U^* (0,0;n)= \frac{1}{2}$.

Then, the asymptotics $U^*( - \infty, x;n) = 1$, $ U^*(\infty,x;n)
= 0$ are uniform with respect to $n \in \mathbb{S}^{N-1}$.
Moreover, for any $K >0$, we have $\inf_{n \in \mathbb{S}^{N-1}}
\inf_{|z| \leq K} \inf_{ x\in \R^N} -\partial_z U^* (z,x;n)
>0$ and  $\inf_{n \in \mathbb{S}^{N-1}}
\inf_{|z| \leq K} \inf_{ x\in \R^N} U^* (z,x;n)
>0$.
\end{prop}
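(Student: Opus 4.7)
The plan is to establish all three assertions by a unified compactness-and-contradiction argument, reasoning along sequences of directions and exploiting the continuity of $n \mapsto c^*(n)$ from Theorem~\ref{th:continuity_speeds} together with standard parabolic regularity for~\eqref{eq-tw} and the strong maximum principle, in the spirit of Section~\ref{s:ignition}.

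For the uniform asymptotics, I would first assume by contradiction that $U^*(z,x;n) \not\to 0$ uniformly as $z \to +\infty$: choose $\delta>0$, $z_k \to +\infty$, $x_k \in \mathcal{C}$ (by periodicity in $x$), $n_k \to n_\infty \in \mathbb{S}^{N-1}$, and $x_k \to x_\infty$ with $U^*(z_k,x_k;n_k) \geq \delta$. Introducing the shifted profiles $V_k(z,x) := U^*(z+z_k, x; n_k)$, parabolic estimates together with $c^*(n_k) \to c^*(n_\infty)$ yield a locally uniform limit $V_\infty$ that solves~\eqref{eq-tw} with speed $c^*(n_\infty)$, is nonincreasing in $z$, periodic in $x$, takes values in $[0,1]$, and satisfies $V_\infty(0,x_\infty) \geq \delta$. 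The decisive observation is that the normalization $U^*(0,0;n_k) = 1/2$ combined with the monotonicity of $U^*$ in $z$ forces $V_k(z,0) \leq 1/2$ for every $z > -z_k$, so $V_\infty(z,0) \leq 1/2$ for all $z \in \R$. Then the monotone limit $V_\infty(-\infty,\cdot)$ is a periodic stationary state taking a value $\leq 1/2$ at $x=0$, and Assumption~\ref{hyp:monostable} leaves only $V_\infty(-\infty,\cdot) \equiv 0$; monotonicity then collapses $V_\infty$ to $0$, contradicting $V_\infty(0,x_\infty) \geq \delta$. The symmetric argument handles $z \to -\infty$.

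For the two lower bounds over $\{|z|\leq K\}$, I would run a similar extraction without any shift: along sequences $z_k \in [-K,K]$, $x_k \in \mathcal{C}$, $n_k \to n_\infty$ realizing vanishing of $U^*(z_k,x_k;n_k)$ or of $-\partial_z U^*(z_k,x_k;n_k)$, the profiles $U^*(\cdot,\cdot;n_k)$ converge locally uniformly (with derivatives) to an entire limit $U^*_\infty$ with $U^*_\infty(0,0)=1/2$, monotone in $z$, periodic in $x$. The same dichotomy as in the previous paragraph, together with the normalization, shows that $U^*_\infty$ connects $1$ at $-\infty$ to $0$ at $+\infty$, and is therefore a minimal-speed monostable pulsating wave in direction $n_\infty$. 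Setting $u_\infty(t,x) := U^*_\infty(x\cdot n_\infty - c^*(n_\infty)t, x)$, I get a nontrivial entire solution of~\eqref{monostable}. If $U^*(z_k,x_k;n_k) \to 0$, then $u_\infty$ vanishes at an interior point; writing $f(x,u_\infty) = g(t,x)\, u_\infty$ with $g$ bounded (using $f(x,0) \equiv 0$), the parabolic strong maximum principle forces $u_\infty \equiv 0$, contradicting $u_\infty(0,0)=1/2$. If instead $-\partial_z U^*(z_k,x_k;n_k) \to 0$, then $w := \partial_t u_\infty = -c^*(n_\infty)\,\partial_z U^*_\infty$ is nonnegative, satisfies the linear parabolic equation obtained by differentiating~\eqref{monostable} in time, and vanishes at an interior point; the strong maximum principle then gives $w \equiv 0$, so $u_\infty$ is stationary, contradicting its asymptotics $1$ and $0$ as $t \to \mp\infty$.

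The main obstacle throughout is to guarantee that the limiting profile $V_\infty$ or $U^*_\infty$ is actually a bona fide monostable pulsating wave with speed $c^*(n_\infty)$, since uniqueness of the minimal-speed monostable wave is not available in the present generality. This is circumvented by exploiting the normalization $U^*(0,0;n_k) = 1/2$, which keeps the limit nontrivial and, in combination with Assumption~\ref{hyp:monostable} forbidding periodic stationary states strictly between $0$ and $1$, pins both asymptotic values. The continuity of the minimal speed from Theorem~\ref{th:continuity_speeds} is what ensures the limit solves the traveling-wave equation with the correct speed $c^*(n_\infty)$, so that the strong maximum principle can be legitimately invoked on a bona fide entire solution of~\eqref{monostable}.
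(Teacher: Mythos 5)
Your proof is correct and follows essentially the same compactness-and-contradiction strategy that the paper sketches (the paper's own proof ends with ``Reasoning by contradiction, it is now straightforward to prove Proposition~\ref{steep_min_wave}''). All the key ingredients you use --- parabolic compactness along $n_k\to n$, continuity of $c^*(\cdot)$ from Theorem~\ref{th:continuity_speeds} to pin the limiting speed, the normalization $U^*(0,0;n)=\tfrac12$ to keep the limit nondegenerate, the classification of $[0,1]$-valued periodic steady states under Assumption~\ref{hyp:monostable}, and the strong maximum principle applied to $u_\infty$ and to $\partial_t u_\infty$ --- are exactly those the paper intends, your only small presentational variation being to shift the profiles by $z_k$ for the asymptotics instead of first upgrading the unshifted convergence to uniform convergence as is done in Section~\ref{s:ignition}.
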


\begin{rem} [Lack of uniqueness]\label{rem-lack}
Such a family of traveling waves is always known to exist.
However, the uniqueness of the traveling wave with minimal speed
in each direction is not known. We shall prove that any sequence
of increasing in time traveling waves with minimal speed in the
directions $n_k \to n$ converges, up to extraction of a
subsequence, to an increasing in time
 traveling wave with minimal speed in the direction $n$, as we did in the ignition case. The
  proposition then easily follows, but the lack of uniqueness is the reason we state this result in a slightly different way.
\end{rem}
\begin{proof}
Proceeding as explained in the above remark, choose some sequence
$n_k \to n \in \mathbb{S}^{N-1}$. As before, one can extract a
subsequence such that $u^* (\cdot ,\cdot;n_k)$ converges locally
uniformly to a solution $u_\infty$ of \eqref{monostable}. By the
continuity of the speeds $c^* (n)$ with respect to $n$, as proved
in Theorem~\ref{th:continuity_speeds}, the function $u_\infty$
also satisfies
$$\forall l \in \prod_{i=1}^N L_i \mathbb{Z} , \quad u_\infty (t,x) = u _\infty\left( t + \frac{l \cdot n}{c^* (n)},x+l \right).$$
Moreover, it is nondecreasing in time, hence increasing in time by
applying the strong maximum principle to $\partial_t u_\infty$. In
particular, it converges to two spatially periodic stationary solutions as $t \to \pm
\infty$ which, as before and thanks to the monostable assumption,
must be 0 and $1$. As announced, $u_\infty$ is an increasing in
time traveling wave with minimal speed in the direction $n$.
Reasoning by contradiction, it is now straightforward to prove
Proposition~\ref{steep_min_wave}.
\end{proof}

\begin{proof}[Proof of \eqref{conclusion2}] First, from
Proposition~\ref{steep_min_wave} above, and
hypotheses~\eqref{hyp1}---\eqref{hyp0}, one can find some shift
$K_1>0$ large enough so that, for any $n \in \mathbb{S}^{N-1}$,
$u_{0,n} (x) \leq U^* (x \cdot n - K_1,x;n)$. Thus, by the
comparison principle,
$$u_n (t,x) \leq U^* ( x \cdot n- c^* (n) t - K_1 ,x ;n).$$
For any $\alpha >0$ and $\delta >0$, let $\tau$ be such that $U^*
(\alpha \tau - K_1, x;n) \leq \delta$, for all $n \in
\mathbb{S}^{N-1}$ and $x \in \R^N$, which is again made possible
by Proposition~\ref{steep_min_wave}. Then \eqref{conclusion2}
  immediately follows.
\end{proof}

\subsection{Relaxing assumption \eqref{hyp0}}\label{ss:without}

We here consider the case when the family $(u_{0,n})_{n \in
\mathbb{S}^{N-1}}$ does not necessarily satisfy \eqref{hyp0}, but
is only uniformly bounded: there is $M>0$ such that
\begin{equation}\label{bounded}
\forall x\in \R ^N, \forall n\in \mathbb S ^{N-1}, \quad
u_{0,n}(x)\leq M .
\end{equation}
We prove that, in such a situation, the uniform lower and upper
spreading properties \eqref{conclusion1} and \eqref{conclusion2}
remain true if we make the following additional assumptions on the
behavior of $f$, and in particular on its behavior above the
stationary state $p$.
\begin{assumption}[Additional assumptions]\label{ass_add}
\begin{enumerate}[(i)]
\item There is $\phi (t,x)$ a solution of \eqref{monostable} such
that $\phi (0,\cdot) \geq M$, and $\phi (t,x)$ converges uniformly
to $1$ as $t \to +\infty$. \item The steady state 0 of
\eqref{monostable} is linearly unstable with respect to periodic
perturbations. \item There exists some $\rho
>0$ such that $f (x,u)$ is nonincreasing with respect to $u$ in
the set $\R^N \times (1-\rho,1+ \rho)$.
\end{enumerate}
\end{assumption}
The first part of this assumption holds true, for instance, if
$f(x,s) < 0$ for all
 $x \in \R^N$ and $s >1$. As we will see below, the second part can be expressed in
 terms of some principal eigenvalue problem, and holds true as soon as $\partial_u f (x,0)$ is
 positive on a non empty set. The last part is a natural extension of $(iii)$ of Assumption~\ref{hyp:monostable}.\\

Combining \eqref{conclusion1-dessous}, whose proof does not
require assumption \eqref{hyp0}, and a comparison of the solutions $(u_n)_{n\in\mathbb S ^{N-1}}$
with $\phi$ given by the above assumption, it is
 clear that the lower spreading property \eqref{conclusion1} still holds true. In
the sequel, we prove the upper spreading property \eqref{conclusion2}. We
start with the following proposition, whose proof is identical to
that of Proposition~\ref{steep_min_wave} and does not require Assumption \ref{ass_add}.

\begin{prop}[Steepness of noncritical waves]\label{prop}
Assume that $f$ is of the spatially periodic monostable type, i.e.
$f$ satisfies \eqref{periodicity} and
Assumption~\ref{hyp:monostable}.
 
For any $\alpha >0$, let $u_\alpha (t,x;n) = U_\alpha (x \cdot n -
(c^* (n) +\alpha) t,x ;n)$ be a family
of increasing in time pulsating
 traveling waves of \eqref{monostable}, in
direction $n$, with speed $c^* (n)+\alpha$, normalized by
$U_{\alpha} (0,0;n)=\frac{1}{2}$.

Then, the asymptotics $U_\alpha( - \infty, x;n) = 1$, $ U_\alpha
(\infty,x;n) = 0$ are uniform with respect to $n \in
\mathbb{S}^{N-1}$. Moreover, for any $K >0$, we have $\inf_{n \in
\mathbb{S}^{N-1}} \inf_{|z| \leq K} \inf_{ x\in \R^N} -\partial_z
U_\alpha (z,x;n)
>0$ and $\inf_{n \in
\mathbb{S}^{N-1}} \inf_{|z| \leq K} \inf_{ x\in \R^N}
U_\alpha (z,x;n)
>0$
\end{prop}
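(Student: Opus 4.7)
The plan is to imitate the argument for Proposition~\ref{steep_min_wave}, with $c^*(n)+\alpha$ replacing $c^*(n)$ throughout. All four conclusions (the two uniform asymptotics and the two positive lower bounds on compact $z$-slices) reduce, by contradiction, to properties of a limiting pulsating wave produced by a compactness argument.

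First, for any sequence $n_k\to n\in\mathbb{S}^{N-1}$, standard parabolic estimates together with the pulsating structure give, along a subsequence, the locally uniform convergence of $u_\alpha(\cdot,\cdot;n_k)$ and its derivatives to a bounded solution $u_\infty$ of~\eqref{monostable}. The continuity of $n\mapsto c^*(n)$ from Theorem~\ref{th:continuity_speeds} allows passage to the limit in the pulsating identity, so $u_\infty(t,x)=U_\infty(x\cdot n-(c^*(n)+\alpha)t,x)$ with $U_\infty(0,0)=1/2$. Since each $u_\alpha(\cdot,\cdot;n_k)$ is increasing in time, $u_\infty$ is nondecreasing in time; applying the strong maximum principle to $v:=\partial_t u_\infty\geq 0$ rules out $v\equiv 0$ (otherwise $u_\infty$ would be a periodic steady state, hence $0$ or $1$ under Assumption~\ref{hyp:monostable}, contradicting $U_\infty(0,0)=1/2$). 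Thus $v>0$, and by monotonicity together with monostability $u_\infty$ connects $0$ at $t=-\infty$ to $1$ at $t=+\infty$; that is, $u_\infty$ is an increasing-in-time pulsating wave with speed $c^*(n)+\alpha$ in the direction $n$.

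Next, I would prove the uniform asymptotics by contradiction. Suppose $n_k\to n$, $z_k\to-\infty$ and $x_k\in\R^N$ yield $U_\alpha(z_k,x_k;n_k)\leq 1-\eta$ for some $\eta>0$. Periodicity of $U_\alpha$ in its second argument lets me assume $x_k\to x_\infty$ in the cell of periodicity. Setting $t_k:=(x_k\cdot n_k-z_k)/(c^*(n_k)+\alpha)\to+\infty$, the shifted family $\tilde u_k(t,x):=u_\alpha(t+t_k,x;n_k)$ is again an increasing-in-time pulsating wave with the same speed and direction, and $\tilde u_k(0,x_k)=U_\alpha(z_k,x_k;n_k)\leq 1-\eta$. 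A second compactness extraction produces a limit $\tilde u_\infty$ that is nondecreasing in $t$ and connects two periodic steady states $p_\pm\in\{0,1\}$. The normalization now supplies the anchor $\tilde u_k(-t_k,0)=u_\alpha(0,0;n_k)=1/2$; combined with time-monotonicity, $\tilde u_k(-T,0)\geq 1/2$ as soon as $t_k>T$, so in the limit $\tilde u_\infty(-T,0)\geq 1/2$ for every $T$, forcing $p_-=1$. Monotonicity then gives $\tilde u_\infty(0,x_\infty)\geq 1$, contradicting the bound $\leq 1-\eta$. The case $z_k\to+\infty$ is symmetric: now $t_k\to-\infty$, and the anchor forces $p_+\leq 1/2$ hence $p_+=0$, while $\tilde u_k(0,x_k)\geq\eta$ forces $p_+\geq\eta>0$.

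Finally, I would handle the two positive lower bounds on $[-K,K]\times\R^N$ by the same contradiction scheme. If $-\partial_z U_\alpha(z_k,x_k;n_k)\to 0$ with $z_k\in[-K,K]$ and, after periodic reduction, $x_k\to x_\infty$, $z_k\to z_\infty$, $n_k\to n$, then the compactness of the first step together with the uniform asymptotics just proved delivers a limit $U_\infty$ which is an increasing-in-time pulsating wave with speed $c^*(n)+\alpha$ in direction $n$ and satisfies $\partial_z U_\infty(z_\infty,x_\infty)=0$; this contradicts the strict positivity $v=\partial_t u_\infty>0$ established in the first step. The bound $\inf U_\alpha>0$ on the same slab follows identically, using that the limit $U_\infty$ is strictly positive by the strong maximum principle (since $U_\infty\to 1$ as $z\to-\infty$). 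The main subtle point throughout is the asymptotics step: once the profile is shifted by $t_k\to\pm\infty$, the normalization point $(0,0)$ drifts to temporal infinity, and its content must be recovered via the time-monotonicity of the $\tilde u_k$ as above.
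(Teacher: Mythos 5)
Your proposal is correct and matches the paper's approach: the paper simply declares this proof \emph{identical} to that of Proposition~\ref{steep_min_wave}, which in turn is a compactness extraction producing a limiting increasing-in-time pulsating wave in direction $n$ followed by a contradiction argument left implicit. Your write-up supplies exactly the missing details of that contradiction step (the time-shift compactness to recover the uniform asymptotics, then the $C^{1,2}_{\rm loc}$ convergence and strong maximum principle for the slab estimates).
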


We now turn to the proof of the upper spreading
property~\eqref{conclusion2}, which relies on the construction of
a suitable family of supersolutions that were already used in
\cite{Ham-Roq} (following an idea of \cite{Fif-Mac}).

\begin{proof}[Proof of \eqref{conclusion2}]Let $\alpha
>0$ and $\delta >0$ be given.  We need to find $\tau
>0$ so that estimate~\eqref{conclusion2} holds for all $t\geq \tau$.

First, we need to introduce some notations, and some well known
results (see \cite{Xin}, \cite{Ber-Ham-02}, \cite{Ham-Roq} among
others). We begin with the principal eigenvalue problem
\begin{equation}\label{principal0}
\left\{
\begin{array}{l}
- L_{0, n , \lambda} \phi_{n,\lambda}  =   \mu_0 (n,\lambda) \phi_{n,\lambda} \ \mbox{ in } \R^N , \vspace{3pt}\\
\phi_{n,\lambda} \mbox{ is periodic}, \; \phi_{n,\lambda} >0,\;
\Vert \phi_{n,\lambda} \Vert
 _\infty=1,
\end{array}
\right.
\end{equation}
where
$$L_{0,n,\lambda} \phi =
\mbox{div} \, (A \nabla \phi) + \lambda^2 (n \cdot A n) \phi -
\lambda (\mbox{div} (An \phi) + n\cdot A\nabla \phi) + q \cdot
\nabla \phi - \lambda (q \cdot n)  \phi + \partial_u f (x,0) \phi
.$$ This arises, similarly as in Section~\ref{s:ignition}, when
looking for moving exponential solutions of the type $e^{-\lambda
(x \cdot n-ct)} \phi_{n,\lambda} (x)$ of the linearized problem
around 0. Such solutions exist if and only if
\begin{equation*}
c \geq c^*_{lin} (n) := \min_{\lambda >0} \frac{- \mu_0
(n,\lambda)}{\lambda},
\end{equation*}
which is well-defined thanks to the linear instability of 0,
which reads as $\mu_0 = \mu_0 (n,0) < 0$. Moreover, it is known that $c^*
(n) \geq c^*_{lin} (n)$ \cite{Ham}. We introduce $\lambda (n)$ the
smallest positive solution of $-\mu_0 (n,\lambda) = \left(c^* (n)
+ \frac{\alpha}{4} \right) \lambda$. It is standard that $\mu_0
(n,\lambda)$ is continuous with respect to $n$ and, as it is known
to be concave, $\lambda (n)$ is also continuous with respect to
$n$. In particular
$$ 0 < \min_{n \in \mathbb{S}^{N-1}} \lambda (n) \leq \max_{n \in \mathbb{S}^{N-1}} \lambda (n) < +\infty.$$

Let some smooth and nonincreasing $\chi (z)$ be such that
$$\chi (z) =
\left\{ \begin{array}{l}
1 \mbox{ if } z < -1,\\
0 \mbox{ if } z > 1,
\end{array}
\right.
$$
and define, for $s\geq 0$ (a shift to be fixed later),
\begin{equation*}
\Phi  (t,x)=\Phi _s(t,x;n) := \chi (\xi _s ) + (1- \chi (\xi _s))
\phi_{n,\lambda (n)} (x) e^{-\lambda (n)\xi _s},
\end{equation*}
where
$$
\xi_s=\xi_s(t,x;n)=x\cdot n- \left(c^*(n)+
\frac{\alpha}{2}\right)t-s.
$$
Note that $\Phi$ is nonnegative and, along with its derivatives,
is bounded uniformly with respect to $n$ and $s$.

Let us now define various positive constants. Choose $0<\eta
<\delta$ small enough so that
\begin{equation}\label{eta-zero}\forall x\in
\R^N , \ \forall 0 \leq u \leq \eta , \quad  | \partial_u f(x,u) -
\partial_u f (x,0) | \leq \frac{\alpha}{4} \min_{n \in
\mathbb{S}^{N-1}} \lambda (n),
\end{equation}
\begin{equation}\label{eta-p}
\forall x\in \R^N , \ \forall 1 - \eta \leq u \leq 1 + \eta ,
\quad  \partial_u f(x,u)  \leq 0.
\end{equation}
Now, by Proposition~\ref{prop}, there is $K>1$ large enough such
that, for all $n \in \mathbb S ^{N-1}$, $x\in \R ^N$,
\begin{equation}\label{K}
\xi > K \Rightarrow 0\leq U_{\frac \alpha 4}(\xi,x;n)\leq \frac
\eta 2,\quad \xi <-K \Rightarrow 1-\frac \eta 2  \leq U_{\frac
\alpha 4}(\xi,x;n)\leq 1.
\end{equation}
Then, by Proposition~\ref{prop} again, we have
\begin{equation}\label{gamma}
\gamma:= \inf_{n \in \mathbb{S}^{N-1}} \inf_{|z| \leq  K , x \in
\R^N} -
\partial_z U_{\frac{\alpha}{4}} (z,x;n) >0.
\end{equation}
Last, we define
$$\epsilon_1 := \frac \eta{2\Vert \Phi \Vert
_\infty} \ , \ \epsilon_2 := \frac {\alpha \gamma}{4(\Vert
\partial _t\Phi \Vert _ \infty+\Vert \mbox{div} (A\nabla  \Phi)
\Vert _\infty + \Vert q \cdot \nabla  \Phi \Vert _\infty+\Vert
\Phi \Vert _\infty \Vert \partial _u f\Vert _{L^\infty(\R^N \times
(0,1 +\frac \eta 2))})}$$ and
\begin{equation}\label{epsilon}
\epsilon:=\min\left(\epsilon _1 , \epsilon _2 \right)>0.
\end{equation}

\medskip

Now, we are going to show that
$$
v(t,x)=v_s(t,x;n):=U_{\frac{\alpha}{4}} \left(x \cdot n -
\left(c^* (n) +\frac{\alpha}{2}\right) t-s,x;n \right) +
\epsilon\Phi (t,x)=U_{\frac{\alpha}{4}} (\xi _s,x;n) +
\epsilon\Phi (t,x)
$$
is a supersolution of the monostable equation~\eqref{monostable}.
Straightforward computations and the mean value Theorem yield
\begin{eqnarray*}\mathcal L[v](t,x)&:=& \partial _t v(t,x)- \mbox{div} (A (x) \nabla v (t,x))  - q (x) \cdot \nabla v(t,x) - f( x,v(t,x))\\
&=&\epsilon \left[\partial _t \Phi(t,x)- \mbox{div} (A (x) \nabla
\Phi(t,x)) - q(x) \cdot \nabla \Phi (t,x) - \Phi(t,x)\partial _u f(x, \theta (t,x)) \right] \\
&& \qquad -\frac \alpha 4
\partial _z U_{\frac \alpha 4}(\xi _s,x;n),
\end{eqnarray*}
for some
$$
U_{\frac \alpha 4}(\xi _s,x;n)\leq \theta (t,x) \leq U_{\frac
\alpha 4}(\xi _s,x;n)+\epsilon \Phi(t,x).
$$
We distinguish three regions, depending on the values of $\xi _s$.

First, if $|\xi _s|\leq K$, the nonnegativity of $\mathcal
L[v](t,x)$ is obtained thanks to $-\frac \alpha 4
\partial _z U_{\frac \alpha 4}(\xi _s,x;n)\geq \frac \alpha 4 \gamma$
by \eqref{gamma} and the definition of~$\epsilon$ in
\eqref{epsilon}.

Next, if $\xi _s
>K$, then $\Phi(t,x)$ reduces to $\phi_{n,\lambda (n)} (x) e^{-\lambda
(n) \xi _s}$ and, dropping $-\frac \alpha 4
\partial _z U_{\frac \alpha 4}(\xi _s,x;n)$ which is positive, we
arrive at
\begin{eqnarray*}\frac 1 \epsilon\mathcal L[v](t,x)&\geq& \left[
\lambda(n)\left( c^*(n)+\frac{\alpha}{2}\right)+\mu
_0(n,\lambda(n))+\partial _u f(x,0)-\partial _u f(x,\theta(t,x))
\right] \phi_{n,\lambda (n)}
(x)e^{-\lambda (n) \xi _s}\\
&\geq &\left(\frac \alpha 4 \lambda (n)+\partial _u
f(x,0)-\partial _u f(x,\theta(t,x)) \right)\phi_{n,\lambda (n)}
(x)e^{-\lambda (n) \xi _s}.
\end{eqnarray*}
But, when $\xi _s >K$, \eqref{K} and $\epsilon\leq \epsilon _1$
imply $0\leq \theta (t,x)\leq \eta $, and the nonnegativity of
$\mathcal L[v](t,x)$ is obtained thanks to \eqref{eta-zero}.

Last, we consider the case where $\xi _s<-K$, so that $\Phi(t,x)$
reduces to $1$. Hence
$$\frac{1}{\epsilon} \mathcal L[v](t,x)\geq -\partial _u f(x,\theta(t,x)).
$$
But, when $\xi _s<-K$, \eqref{K} and $\epsilon\leq \epsilon _1$
imply $1-\eta\leq\theta(t,x)\leq 1+\eta$, and the nonnegativity of
$\mathcal L[v](t,x)$ is obtained thanks to \eqref{eta-p}. Hence, $
v_s(t,x;n)$ is a supersolution of \eqref{monostable}.

\medskip

Thanks to \eqref{bounded}, we get by the comparison principle that, for all $ n \in \mathbb{S}^{N-1}$, all $t\geq 0$, all $x \in \R^N$, $ u_n (t,x) \leq \phi (t,x)$,
 where $\phi$ is
given by Assumption~\ref{ass_add}. 
Now choose $T >0$ such that $\phi (T,x) \leq 1 +
\frac{\epsilon}{2}$, and get that
\begin{equation}\label{end1}
\forall n \in \mathbb{S}^{N-1}, \  \forall x \in \R^N , \quad  u_n (T,x) \leq  1 +
\frac{\epsilon}{2}.
\end{equation}

Using the comparison principle and a computation identical to that of \eqref{gggg}, we get that, for any large $\lambda >0$, there is $C>0$ --- independent on $n$ thanks to \eqref{hyp1} and
\eqref{bounded}--- such that
$$
\forall n \in \mathbb{S}^{N-1}, \ \forall t \geq 0 , \ \forall x \in \R^N , \quad u_n (t,x) \leq Ce^{-\lambda (x\cdot n-2a_1\lambda t)}.
$$
In particular $u_n (T,\cdot)$ decays faster than any
exponential as $x\cdot n \to +\infty$, namely
\begin{equation}\label{faster}
\forall \lambda >0 , \quad u_n (T,x) e^{\lambda x\cdot n}
 \to 0 \ \mbox{ as } x \cdot n \to +\infty, \ \mbox{ uniformly w.r.t. } n \in \mathbb{S}^{N-1}.
 \end{equation}

Observe that, for all $s\geq 0$, 
\begin{equation}\label{droite-s}
\forall n\in \mathbb S  ^{N-1},\ \forall x\cdot n\geq \left(c^{*}(n)+\frac{\alpha}{2}\right)T+s+1, \ v_s(T,x;n)\geq \epsilon \phi _{n,\lambda(n)}(x)e^{-\lambda(n)x\cdot n}\geq \epsilon \gamma e^{-\lambda_{max}x\cdot n},
\end{equation}
where $\gamma:=\min_{n \in
\mathbb{S}^{N-1}} \min_{ x\in \R^N} \phi_{n,\lambda (n)} (x)
>0$ and $\lambda _{max}:=\max_{n\in \mathbb S ^{N-1}} \lambda(n)<\infty$ (recall that $n\mapsto \lambda (n)$ is
continuous and so is $(n,\lambda)\mapsto \phi_{n,\lambda}$). Now, select $A>1$ large enough so that, for all $s\geq 0$,
\begin{equation}\label{gauche-s}
\forall n\in \mathbb S  ^{N-1},\ \forall x\cdot n\leq \left(c^{*}(n)+\frac{\alpha}{2}\right)T+s-A, \ v_s(T,x;n)\geq 1+\frac \epsilon 2,
\end{equation}
which is possible thanks to Proposition~\ref{prop}, and more
precisely the uniform with respect to~$n$ asymptotics of
$U_{\frac{\alpha}{4}} (z,x;n)$ as $z \to -\infty$. Proposition~\ref{prop} also enables to define
$$
\kappa:=\inf_{n\in \mathbb S ^{N-1}} \inf _{-A\leq z\leq 1} \inf _{x\in \R ^{N}}U_{\frac{\alpha}{4}} (z,x;n)>0,
$$
so that, for all $s\geq 0$,
\begin{equation}\label{milieu-s}
\forall n\in \mathbb S  ^{N-1},\ \forall \left(c^{*}(n)+\frac{\alpha}{2}\right)T+s-A\leq x\cdot n\leq \left(c^{*}(n)+\frac{\alpha}{2}\right)T+s+1, \ v_s(T,x;n)\geq \kappa.
\end{equation}

In view of \eqref{faster}, we can now select a large enough shift $s_0>A$ so that
$$
\forall n\in \mathbb S  ^{N-1},\ \forall x\cdot n\geq \left(c^{*}(n)+\frac{\alpha}{2}\right)T+s_0-A, \ u_n(T,x)\leq \min \{ \epsilon  \gamma , \kappa \} e^{-\lambda_{max}x\cdot n}.
$$
Combining this with \eqref{end1}, \eqref{droite-s}, \eqref{gauche-s}, \eqref{milieu-s}, we have that, for all $n \in \mathbb{S}^{N-1}$ and $x \in \R^N$,
\begin{equation*}
u_n(T,x)\leq v_{s_0}(T,x;n).
\end{equation*}

\medskip

Then, by the comparison principle, for all $t\geq T$, $x\in \R
^N$, $n\in\mathbb S ^{N-1}$,
$$
0\leq u_n(t,x)\leq  U_{\frac \alpha 4} \left( x\cdot n- \left(
c^*(n)+ \frac{\alpha}{2} \right)t-s_0,x;n \right)+\epsilon
\Phi(t,x).
$$
Hence, when $x\cdot n\geq (c^*(n)+\alpha )t$, we have, since
$\epsilon \Vert \Phi \Vert _\infty \leq \frac \eta 2 \leq \frac
\delta 2$, that
$$
0\leq u_n(t,x)\leq  U_{\frac \alpha 4}\left( \frac{\alpha}{2}
t-s_0,x;n\right)+\frac \delta 2 \leq \delta,
$$
as soon as $t\geq \tau$, where $\tau>0$ is large enough (again
independently on $n$ by Proposition~\ref{prop}). This proves
\eqref{conclusion2}.\end{proof}

\section{The uniform spreading: the ignition case}\label{s:spread_ignition}

For the sake of completeness, we give here the main steps to prove
 Theorem~\ref{th:unif_spreading} in the (simpler) ignition
 case. We will see that it  follows from the continuity of
 ignition waves, Theorem~\ref{th:continuity_igwaves},
  together with the standard idea explained in Remark~\ref{rem_bug}. We will briefly sketch at
  the end of this section how the hypothesis \eqref{hyp0} can again be relaxed.

  \begin{proof}[Proof of Theorem \ref{th:unif_spreading} in the
  ignition case]
First, the proof of the  uniform upper
spreading~\eqref{conclusion2} is  the same as that of subsection
\ref{ss:with}
 in the monostable case, using Theorem~\ref{th:continuity_igwaves} (continuity of ignition waves) instead of Proposition~\ref{steep_min_wave} (steepness of critical
 waves).

\medskip
Let us now prove the uniform lower spreading~\eqref{conclusion1}.
Let  $\alpha >0$ and $\delta >0$ be given.
 We may reduce $\delta$ without loss of generality, and assume that $\delta < \rho$ where $\rho$ is given by part
 $(iii)$ of Assumption~\ref{hyp:ignition}. Using the same
 arguments as in the proof of \eqref{claim:proof_uniformspread1},
 we get the existence of some time $t_\delta>0$ such that
$$
u_n (t_\delta,x) \geq 1 - \frac{\delta}{2},
$$
for all $n \in \mathbb{S}^{N-1}$ and all $x$ such that $x \cdot n
\leq - K$. Now let, as usual, $U^* (x\cdot n - c^* (n) t,x;n)$ be
the unique ignition pulsating wave in the direction~$n$,
normalized by $\min _{x\in \R ^N} U^* (0,x;n) =
\frac{1+\theta}{2}.$ Thanks to the inequality above and the
continuity of the mapping $n \mapsto U^* (\cdot , \cdot;n)$ with
respect to the uniform topology, it is clear that there exists
some shift $Z>0$ such that, for all $n\in \mathbb{S}^{N-1}$,
$$
U^* (x \cdot n +Z ,x ;n) - \frac{\delta}{2} \leq  u_n (t_\delta
,x), \quad \forall x\in \R ^N.
$$

We then check that $\underline{u} (t,x) := U^* (x \cdot n + Z -
(c^* (n) - \frac{\alpha}{2}) t,x;n) - \frac{\delta}{2}$ is a
subsolution of \eqref{monostable}. Indeed,
\begin{equation*}
 \partial_t \underline{u}- \mbox{div} \, (A(x) \nabla \underline{u}) - q(x) \cdot \nabla \underline{u} - f(x,\underline{u}) =  \frac{\alpha}{2} \partial_z U^* + f(x,U^*) - f(x,\underline{u}).
\end{equation*}
Assume first that $\underline{u} \leq \theta - \frac{\delta}{2}$.
Then $f(x,\underline{u})= f(x,U^* (x\cdot n + Z - (c^* (n) -
\frac{\alpha}{2} )t,x;n)=0$, which together with the monotonicity
of $U^*$ with respect to its first variable, gives the wanted
inequality. Assume then that $\underline{u} \geq 1 - \rho$. Then,
by the monotonicity of $f$ with respect to $u$ in the range
$[1-\rho,1]$, we again obtain the wanted inequality. 

It remains to
prove that $\underline{u}$ is a subsolution when $\theta -
\frac{\delta}{2} \leq \underline{u} \leq 1 - \rho$ or,
 equivalently, when $\theta \leq U^* (x\cdot n - (c^* (n)- \frac{\alpha}{2})t+Z,x;n) \leq 1 - \rho + \frac{\delta}{2}$.
 Recall first that $1 - \rho + \frac{\delta}{2}< 1 - \frac{\rho}{2}$. Using again the continuity of the ignition wave
 with respect to the direction, we have that there exists some~$R>0$ such
 that, for all $n\in \mathbb S ^{N-1}$,
$$z \geq R \ \Rightarrow \ U^* (z + Z ,x;n) < \theta, \quad z \leq -R \ \Rightarrow \  U^* (z+Z,x;n) > 1 - \frac{\rho}{2},$$
and, furthermore,
$$\max_{n \in \mathbb{S}^{N-1}} \max_{|z| \leq R} \  \max_{x \in \R^N} \  \partial_z U^* (z+Z,x;n) <0.$$
Up to reducing $\delta$ again, we may assume that
$$\max_{n \in \mathbb{S}^{N-1}} \max_{|z| \leq R} \  \max_{x \in \R^N} \  \partial_z U^* (z+Z,x;n) < - \frac{M\delta}{\alpha},$$
where $M$ is a Lipschitz constant of $f$. Therefore, when $\theta
- \frac{\delta}{2} \leq \underline{u} \leq 1 - \rho$, then $|x
\cdot n - (c^* (n) - \frac{\alpha}{2} )t | \leq R$ and
\begin{eqnarray*}
&& \partial_t \underline{u}- \mbox{div} \, (A(x) \nabla \underline{u}) - q(x) \cdot \nabla \underline{u} - f(x,\underline{u}) \vspace{3pt}\\
& = &  \frac{\alpha}{2} \partial_z U^* + f(x,U^*) - f(x,\underline{u}) \vspace{3pt}\\
& \leq & \frac{\alpha}{2} \partial_z U^* +  M \frac{\delta}{2}
\leq 0,
\end{eqnarray*}
that is the wanted inequality.

We can therefore apply the comparison principle and conclude that
$$U^* \left(x \cdot n + Z - (c^* (n) - \frac{\alpha}{2})t ,x;n \right) - \frac{\delta}{2}\leq u_n (t_\delta +t,x),$$
for all $x \in \R^N$ and $t \geq 0$.

Noting that there exists some other shift $Z'>0$ such that, for
all $ n \in \mathbb{S}^{N-1}$,
$$z \leq -Z ' \ \Rightarrow  \ U^* (z,x;n) - \frac{\delta}{2} \geq  1 -\delta,$$
we get the uniform lower spreading \eqref{conclusion1} as in the
end of Section \ref{s:lower-spreading}.
\end{proof}

\begin{proof}[Relaxing hypothesis \eqref{hyp0}]
In order to relax  \eqref{hyp0}, assume now that $f$ satisfies
Assumption~\ref{hyp:ignition} and parts $(i)$ and $(iii)$ of
Assumption~\ref{ass_add}. As above, one can then show that $U^* (x
\cdot n - (c^* (n) + \frac{\alpha}{2})t,x;n) + \frac{\delta}{2}$
is a supersolution of \eqref{monostable}. Then, as in
Section~\ref{ss:without}, one can find some time $T$ and some
shift $s_0$ such that, for all $n$, the solution $u_n (T,x)$ lies
below $U^* (x \cdot n + s_0 - ( c^* (n)+ \frac{\alpha}{2})T,x;n) +
\frac{\delta}{2}$ in the whole space. It is then straightforward
to obtain the wanted uniform upper spreading~\eqref{conclusion2}.
\end{proof}

\bigskip

\noindent \textbf{Acknowledgement.} M. A. was supported by the
French {\it Agence Nationale de la Recherche} within the project
IDEE (ANR-2010-0112-01). T. G. was supported by the French {\it Agence Nationale de la Recherche} within the project NONLOCAL (ANR-14-CE25-0013). Both authors are grateful to Professor
Hiroshi Matano for great hospitality in the University of Tokyo,
where this work was initiated.

\end{document}